\documentclass[reqno, 11pt]{amsart}
\usepackage[left=2cm, right=2cm, top=3cm, bottom=3cm]{geometry}
\usepackage[T1]{fontenc}
\usepackage[utf8]{inputenc}	
\usepackage[english]{babel}
\usepackage[babel]{csquotes}
\usepackage{amssymb}
\usepackage{amsmath}
\usepackage{amsthm}
\usepackage{latexsym}
\usepackage{xcolor}
\usepackage{mathrsfs}
\usepackage{bbm}
\usepackage{verbatim}
\usepackage{graphicx}
\usepackage{array}
\usepackage{tikz}
\usepackage{subcaption}
\usepackage{tabularx}
\usepackage{pgfplots}
\usepackage{float}
\usepackage{mathtools}
\usepgfplotslibrary{fillbetween}

\newcolumntype{M}[1]{>{\centering\arraybackslash}m{#1}}

\newcommand\dela[1]{}

\DeclareMathOperator{\divv}{\mathrm{div}}

\numberwithin{equation}{section}

\newcommand{\eps}{\varepsilon}

\usepackage[colorlinks=true, linkcolor=blue, citecolor=blue, urlcolor=blue]{hyperref}

\usepackage{faktor}
\usepackage[shortlabels]{enumitem}
\usepackage[ruled,vlined]{algorithm2e}

\definecolor{myblue}{RGB}{31,119,180}
\definecolor{myorange}{RGB}{255,127,14}
\definecolor{mygreen}{RGB}{44,160,44}
\definecolor{myred}{RGB}{214,39,40}
\definecolor{mypurple}{RGB}{148,103,189}
\definecolor{mybrown}{RGB}{140,86,75}

\newtheorem{theorem}{Theorem}[section]

\newtheorem{definition}[theorem]{Definition}
\newtheorem{lemma}[theorem]{Lemma}
\newtheorem{proposition}[theorem]{Proposition}
\newtheorem{remark}[theorem]{Remark}

\newtheorem{notation}[theorem]{Notation}

\newcommand\E{\mathbb E}
\newcommand\Po{\mathbb P}
\def\cB{\mathscr{B}}
\def\cE{\mathscr{E}}
\def\B{\mathcal{B}}
\def\C{\mathrm{C}}

\makeatletter
\DeclareFontFamily{OMX}{MnSymbolE}{}
\DeclareSymbolFont{MnLargeSymbols}{OMX}{MnSymbolE}{m}{n}
\SetSymbolFont{MnLargeSymbols}{bold}{OMX}{MnSymbolE}{b}{n}
\DeclareFontShape{OMX}{MnSymbolE}{m}{n}{
	<-6>  MnSymbolE5
	<6-7>  MnSymbolE6
	<7-8>  MnSymbolE7
	<8-9>  MnSymbolE8
	<9-10> MnSymbolE9
	<10-12> MnSymbolE10
	<12->   MnSymbolE12
}{}
\DeclareFontShape{OMX}{MnSymbolE}{b}{n}{
	<-6>  MnSymbolE-Bold5
	<6-7>  MnSymbolE-Bold6
	<7-8>  MnSymbolE-Bold7
	<8-9>  MnSymbolE-Bold8
	<9-10> MnSymbolE-Bold9
	<10-12> MnSymbolE-Bold10
	<12->   MnSymbolE-Bold12
}{}

\let\llangle\@undefined
\let\rrangle\@undefined
\DeclareMathDelimiter{\llangle}{\mathopen}%
{MnLargeSymbols}{'164}{MnLargeSymbols}{'164}
\DeclareMathDelimiter{\rrangle}{\mathclose}%
{MnLargeSymbols}{'171}{MnLargeSymbols}{'171}
\makeatother

 \pgfplotsset{compat=1.18}
\begin{document}	
	\title[Existence of ergodic invariant measures for 3D  Navier-Stokes eqs with superlinear noise]{Existence of ergodic invariant measures for 3D  Navier-Stokes equations with superlinear noise}
	
	\author{Zdzislaw Brzezniak, Benedetta Ferrario, Margherita Zanella}
    \address{Department of Mathematics, University of York,
			Heslington, York, YO105DD, UK}
    \email{zdzislaw.brzezniak@york.ac.uk}
    \address{Dipartimento di Scienze Economiche e Aziendali,
		Università di Pavia, 27100 Pavia, Italy}
	\email{benedetta.ferrario@unipv.it}
    \address{Dipartimento di Matematica,
		Politecnico di Milano, Via E.~Bonardi 9, 20133 Milano, Italy}
	\email{margherita.zanella@polimi.it}
	\subjclass[2020]{}
	\keywords{}
	
	\begin{abstract}
    We consider  the  Navier-Stokes equations in a smooth bounded domain $D\subset \mathbb R^3$.
It has been proved in \cite{BZZ26} that these equations have a unique solution - globally defined in time -
when a superlinear noise  is introduced and  the initial velocity belongs to the space $V$ of divergence-free $H^1_0(D;\mathbb R^3)$-vector fields.
Here we show that the same result holds when there is also an additive noise and a  deterministic forcing term.
Moreover, when the deterministic forcing term is  time-independent 
we prove the existence of at least one invariant measure for this stochastic equation.
This is obtained by the Krylov-Bogoliubov method, based on tightness and Feller property in $V$. Finally, we show that the set of invariant measures contains one ergodic invariant measure.
	\end{abstract}
	\maketitle

\section{Introduction}

 We consider a velocity field $u$
evolving on a bounded three-dimensional domain $D$
with no-slip Dirichlet boundary conditions, which moves according to the Navier-Stokes Equations (NSEs)
\begin{equation}\label{NS}
\begin{aligned}
\partial_t u(t,x) &- \nu \Delta u(t,x)  + (u(t,x) \cdot \nabla) u(t,x) + \nabla p(t,x)  = F(t,x), \;\; t\geq 0, x \in D
\\
\divv   u(t,x) &= 0, \;\; t\geq 0, x \in D \\
u(0,x)&=u_0(x).
\end{aligned}
\end{equation}
Here $\nu>0$ is the viscosity, $p$ is the pressure,  $F$ is the external force and $u_0$ is the initial velocity.
There is a huge literature on the subject on these  equations, starting with the seminal work by Leray \cite{Leray_1934} about the global existence of weak solutions and finishing with the important paper \cite{Alb+Brue+Col_20022}
by  Albritton et all about non-uniqueness of weak solutions to some forced equations, i.e. when $F\not=0$. There is also big literature about stochastic NSEs, i.e. those when  the force $F$ is random starting with a papers \cite{Bens+Tem_1973} by Bensoussan and Temam, Flandoli \cite{Fla94} and possibly finishing with a recent paper \cite{Cheskidov+Zeng+Zhang_2026} by Cheskidov et all.

The main objective of the present paper is a certain stochastic version of  equations \eqref{NS} where the random force $F$
depends  in a nonlinear way of the energy $\frac12 \int_D \vert \nabla u(t,x) \vert^2 \, {\rm d}x$  of the solution $u$. To be precise, we assume that the random force $F$  is of the following form
\begin{equation}\label{eqn-F-random}
   F=f+\phi(\vert \nabla u(t)\vert_{L^2}) u(t)\, \frac{\partial}{\partial t} w_0(t)
+\frac{\partial}{\partial t} \tilde{w}(t),
  \end{equation}
where $f$ is a deterministic  external force,  $w_0$ is a real Wiener process  and $\tilde{w}$ is an infinite dimensional Wiener process.
This type of noise is a generalization of the noise studied in \cite{BZZ26},  by the first named author Zhang and Zhu,  where $f=0$ and  $\tilde w=0$.
Earlier papers on this type of stochastic NSEs are
\cite{TW} by Tang and Wang, \cite{CL} by Crisan and Lang
and especially
\cite{HLL} by Hong et all, which motivated the study \cite{BZZ26}. See also the paper \cite{BMX25} by Bagnara et all  for similar techniques but for different equations.

Our main  objective is to study ergodic properties of such stochastic NSEs. The well posedness of our system can be proven in a similar way to the well-posedness in  \cite{BZZ26}, see Theorem \ref{ex_uniq_sol} and its proof in section \ref{proof_well_pos}.
 Our main results are: (i)  Theorem \ref{teo-Feller} that the semigroup generated by our NSEs \eqref{SNS} is Markov and Feller, and (ii)
 Theorem \ref{thm_existence_invmeas}
that there exists an invariant measure $\mu$  for the Markov semigroup from (i) such that    for every $p\in (0,1)$, $\int_{V}\|x\|_{D(A)}^p\, \mu({\rm d}x)< \infty$ (and hence $\mu(D(A))=1$).

\dela{
{\color{gray}
Now we want to consider a more general noise.
The interest in introducing the second additive noise is that we do not expect the unique invariant measure to be the concentrated measure $\delta_0$ as in \cite{BFMZ25} for the pure multiplicative noise with coefficient $\varphi(u)u$, which is a quite trivial invariant measure.
}}

\bigskip

The subject of existence of invariant measures for stochastic Navier-Stokes Equations  (SNSEs) is quite well understood now. The first paper was written by Flandoli \cite{Fla94} in 1994; this paper dealt with SNSEs in  bounded 2-dimensional domains and with additive noise. 
This result was extended by Flandoli and Gatarek in \cite{FG95}, where the authors proved the existence of a stationary solution  to  the 3D Navier-Stokes equations in bounded domains with multiplicative noise, in both the standard and the transport forms. 
For  the 2D Navier-Stokes equations in bounded domains they proved the existence of an invariant measure because "in this case the transition semigroup is well defined". 
Chow and Khasminski in \cite{CK97} have proved similar results for Stochastic Evolution Equations including SNSEs, driven by an additive noise.

The question of existence of invariant measures in unbounded, also 2-dimensional,  domains for stochastic NSEs driven by an additive noise was solved by the first named author and Y.~Li in \cite{BL06}. 
The method was based on proving the existence of  compact invariant sets via asymptotical-compactness idea of Ball from \cite{Ball2004}, and on a result by Crauel \cite[Corollary 4.4]{CF94}, who proved that a compact invariant set, in the framework of random dynamical systems, supports an invariant measure for the corresponding Markov process. 
A more general result about the existence of invariant measures in unbounded 2-dimensional  domains for the stochastic NSEs driven by a multiplicative noise was presented by the first named author, Motyl and Ondrej\'at in \cite{BMO17}. 
In that paper they proved the existence of an invariant measure for the general 2D Navier-Stokes equations in unbounded domains satisfying the so-called Poincar\'e condition.  
Later on, the first and the second named authors generalized both  papers \cite{FG95} \cite{BMO17} and studied the existence of a stationary solution  to  the 3D Navier-Stokes equations in unbounded domains, see \cite{BF19}. 
The last two papers use a beautiful idea of  Maslowski and Seilder from \cite{Masl+S_1999} about Krylov-Bogoliubov method in a weak topology. 
Apart from the question of existence of invariant measures, a very important problem is of  the uniqueness of the invariant measure. 
This is a much more difficult problem and it was first studied by Flandoli and Maslowski \cite{FM95} and later by Ferrario \cite{Fer97, Fer99}. 
Finally, very important results were published by Hairer and Mattingly in 2006 \cite{HM06} and in 2011 \cite{HM11} about the uniqueness of the invariant measure for the 2D Navier-Stokes equations on the torus with degenerate noise.

There are also papers about invariant measures for various modifications of the stochastic 3-dimensional Navier-Stokes equations. Let us concentrate on one of them, usually called the tamed Navier-Stokes Equations or Brinkman--Forchheimer Equations, versions of the Navier-Stokes equations. We would like to mention three papers where the question of existence of invariant measures has been studied. One of them is by R\"ockner and Zhang \cite{RZ09}, where they proved the existence of an invariant measure for such equations on the torus, exploiting the fact that the equation was on the torus. The second  paper, an important generalization of the result by R\"ockner and Zhang,  is  by the first named author and Dhariwal \cite{BD20},  where the existence of invariant measures was proven for the whole space $\mathbb R^3$. A generalization of both these papers is discussed in a recent paper \cite{KCM2026}  by Kinra et al.

Recently, another modification of the stochastic 3-dimensional equations has become very popular, see the earlier mentioned papers \cite{TW}
\dela{by Tang and Wang}, \cite{HLL}
\dela{by Hong et all} and \cite{BZZ26}\dela{ by the first named author et all}. The primary aim of those papers was to prove the global existence of the modified 3D SNSEs. This modification is also studied in the present paper. However, the principle objective of our work is to show  the existence of an invariant measure for these equations.
From the point of view of the existence of an invariant measure,  we should not forget to  mention the recent beautiful paper \cite{CL}, in which the  authors considered the existence of global solutions to 3D SNSEs in their own framework, namely for initial data in  Sobolev spaces of much higher regularity, and  claim that they plan to study the existence of an invariant measure in a  subsequent paper.

Similarly, questions of existence of invariant measures were studied for modified stochastic nonlinear Schr\"odinger equations, for instance in a recent paper by the present three authors and Maurelli \cite{BFMZ25}, where they proved that a certain modification by suitable superlinear noise of the Nonlinear Schr\"odinger Equation yields the global existence, that under stronger conditions on the modification there exists also an invariant measure, and that under much stronger conditions on the coefficient the unique invariant measure is the Dirac delta measure at the origin. Let us point out, that surprisingly, contrary to the paper \cite{BFMZ25}, our conditions for the existence of an invariant measure are just the same as the assumptions in \cite{BZZ26} for the existence of global strong solutions.

As far as the contents of the paper are concerned, in Section \ref{sect:prelim} we introduce the basic notations and assumptions. 
In Section \ref{sect:wellp} we present the result of global existence of a unique solution.
Then the existence of invariant measures is given in  Section \ref{sect:invmeas} and that of ergodic invariant measures in Section \ref{sect:ergmeas}.
Finally, in the Appendix we provide some technical results.

\bigskip

\section{Preliminaries}
\label{sect:prelim}

\subsection{Functional spaces and operators}
\label{subsec:functional-spaces}

We fix an open bounded subset $D\subset\mathbb R^3$. For $1\le p\le\infty$ we
consider the Lebesgue space $\mathbb L^p=L^p(D;\mathbb R^3)$, endowed with the
norm
\[
\|u\|_{\mathbb L^p}=\Big(\int_D|u(x)|^p\,{\rm d}x\Big)^{1/p}\ (1\le p<\infty),
\qquad
\|u\|_{\mathbb L^\infty}=\operatorname*{ess\,sup}_{x\in D}|u(x)|.
\]
For $s\in\mathbb N$ we denote by $\mathbb H^s=H^s(D;\mathbb R^3)$ the Hilbert
space of elements of $\mathbb L^2$ whose distributional derivatives up to
order $s$ also belong to $\mathbb L^2$; the scalar products in
$\mathbb L^2=\mathbb H^0$ and $\mathbb H^1$ are
\[
\langle u,v\rangle_{\mathbb L^2}=\sum_{j=1}^3\int_Du_j(x)v_j(x)\,{\rm d}x,
\qquad
\langle u,v\rangle_{\mathbb H^1}=\langle u,v\rangle_{\mathbb L^2}
+\langle\nabla u,\nabla v\rangle_{\mathbb L^2},
\]
where $\langle\nabla u,\nabla v\rangle_{\mathbb L^2}
=\sum_{j,k=1}^3\int_D\partial_{x_k}u_j(x)\,\partial_{x_k}v_j(x)\,{\rm d}x$.

\smallskip
\subsubsection{Divergence-free spaces.}
Since we deal with incompressible
fluids, we work with the usual spaces, see, e.g., the monograph \cite{temam}, 
\begin{align}
H&\coloneq \{u\in\mathbb L^2:\operatorname{div}u=0\text{ in }D,\ u\cdot n=0\text{ on
}\partial D\},\\
V&\coloneq \{u\in\mathbb H^1:\operatorname{div}u=0\text{ in }D,\ u=0\text{ on
}\partial D\},
\end{align}
$n$ being the outer unit normal to $\partial D$. The space $H$ carries the
Hilbert structure inherited from $\mathbb L^2$, with norm $\|\cdot\|_H$ and
scalar product $(\cdot,\cdot)_H$. 
Identifying $H$ with its dual gives the variational
structure
\[
V\subset H\subset V^*,
\]
with dense and compact embeddings.

\smallskip
\subsubsection{The Stokes operator.}
Let $\Pi:\mathbb L^2\to H$ be the
Leray--Helmholtz orthogonal projection, and define the Stokes operator $A$ by
\[
Au=-\Pi(\Delta u)\in H\qquad\text{for }u\in D(A)=\mathbb H^2\cap V.
\]
The domain $D(A)$ is a Hilbert space when endowed with the graph norm
\begin{equation}
\|u\|_{D(A)}^{2}:=\|u\|_{H}^{2}+\|Au\|_{H}^{2}, \quad u\in D(A);
\end{equation}
this norm is equivalent to the $\mathbb{H}^{2}$ norm. 

Since $A$ is a nonnegative self-adjoint operator on $H$, the spectral theorem
gives an orthonormal basis $\{e_k\}_{k\ge1}$ of $H$ and eigenvalues
$0<\lambda_1\le\lambda_2\le\cdots\to\infty$ with $Ae_k=\lambda_ke_k$; the
fractional powers of $A$ are then defined, for every $\alpha\in\mathbb R$, by
\[
D(A^\alpha):=\Big\{u=\sum_{k\ge1}u_ke_k\in H:\sum_{k\ge1}\lambda_k^{2\alpha}
|u_k|^2<\infty\Big\}.
\]
It follows that $D(A^{1/2})=V$. Thanks to the Poincar\'e-type inequalities
\begin{equation*}
\|u\|_H^2\le\frac1{\lambda_1}\|u\|_V^2,
\end{equation*}
\begin{equation}\label{Poincare-in-DA}
\|u\|_V^2 \le \frac1{\lambda_1}\|Au\|_H^2,
\end{equation}
that hold for every $u\in V$, respectively  $u\in D(A)$, we identify the
$V$-norm with $\|A^{1/2}u\|_H$ and the $D(A)$-norm with $\|Au\|_H$.

In this paper we consider $V$ as state space and therefore we deal with the restriction of the semigroup $S(t):=e^{At}$ to the space $V$. It is known that it is an analytic   semigroup in $V$.

\smallskip
\subsubsection{Path spaces.}
For $T>0$ we set
$X_T:=C([0,T];V)\cap L^2(0,T;D(A))$, endowed with
\[
\|u\|_{X_T}^2:=\sup_{t\in[0,T]}\|u(t)\|_V^2+\int_0^T\|Au(t)\|_H^2\,{\rm d}t,
\]
and denote by $\mathbb M^2(X_T)$ the separable Banach space of (equivalence
classes of) $D(A)$-valued progressively measurable processes $u$ whose
trajectories a.s.\ belong to $X_T$, with
$\|u\|_{\mathbb M^2(X_T)}^2:=\E\big(\|u\|_{X_T}^2\big)<\infty$.

\smallskip
\subsubsection{The nonlinear term.} We define the bilinear operator
$B(u,v):=\Pi\big((u\cdot\nabla)v\big)$ and set $B(u):=B(u,u)$. It satisfies
\begin{equation}\label{B-properties}
\langle B(u,v),w\rangle=-\langle B(u,w),v\rangle,\qquad
\langle B(u,v),v\rangle=0,\qquad u,v,w\in V,
\end{equation}

We will repeatedly use the following bounds,
proved respectively in \cite[Lemma~2.7]{BZZ26} and
\\
\cite[Proposition~7.5]{BZZ26} that  there exists $C>0$ such that, for every
$u,v\in D(A)$,
\begin{align}\label{prima-stime-B}
\|B(u)\|_H&\le C\|u\|_{D(A)}^{1/2}\|u\|_V^{3/2},
\\
\label{stima-diff-B}
\|B(u)-B(v)\|_H &\le C\Big(\|u\|_{D(A)}^{1/2}\|u\|_V^{1/2}\|u-v\|_V
+\|u-v\|_{D(A)}^{1/2}\|u-v\|_V^{1/2}\|v\|_V\Big).
\end{align}


\subsection{Stochastic framework and abstract formulation}
\label{subsec:stochastic}

Let $(\Omega,\mathcal F,(\mathcal F_t)_{t\ge0},\Po)$ be a filtered
probability space satisfying the usual conditions (saturated and
right-continuous filtration), carrying a sequence $\{w_j\}_{j=0}^\infty$ of
i.i.d.\ real-valued Wiener processes. Applying $\Pi$ to the Navier--Stokes
equation \eqref{NS} eliminates the pressure and yields the abstract
deterministic form
\[
\frac{{\rm d}u}{{\rm d}t}(t)+\nu Au(t)+B(u(t))=\Pi F,
\]
where, throughout this paper, $F$ is taken to be time-independent and we assume $\Pi F=f$.
Adding the stochastic forcing terms, a multiplicative noise
$g(u)=\varphi(u)u$ driven by $w_0$, and an additive noise driven by the
remaining $\{w_j\}_{j\ge1}$, we arrive at the stochastic Navier--Stokes
equation studied in this paper:
\begin{equation}\label{SNS}
{\rm d}u(t)+[Au(t)+B(u(t))]\,{\rm d}t= f\,{\rm d}t+g(u(t))\,{\rm d}w_0(t)
+G\,{\rm d}w(t),
\end{equation}
where
\[
Gw(t):=\sum_{j=1}^\infty\gamma_je_j^V\,w_j(t),\qquad e_j^V:=e_j/\sqrt{\lambda_j},
\]
$\{e_j^V\}_{j\ge1}$ being the orthonormal basis of $V$ induced by
$\{e_j\}_{j\ge1}$, and $\{\gamma_j\}_{j\ge1}\subset\mathbb R$.

\subsection{Assumptions}
\label{subsec:assumptions}

We consider a multiplicative noise coefficient $g(u)=\varphi(u)u$, with $\varphi:V\to\mathbb R$.
We denote by
\textbf{[A1]} the conjunction of the following three conditions:
\begin{itemize}
\item[A1a)] \emph{(Local Lipschitz continuity.)} For every $R>0$ there is
$C_R>0$ such that
\[
\|g(u)-g(v)\|_V\le C_R\|u-v\|_V,
\]
 for all $u,v\in V$ with
$\|u\|_V,\|v\|_V\le R$.
\item[A1b)] \emph{(Superlinear growth.)} There exist $\kappa>2$ and $b>0$
such that $|\varphi(u)|\ge b\|u\|_V^\kappa$ for every $u\in V$.
\item[A1c)] \emph{(Local boundedness on path space.)} For every $T,R>0$,
there is $C:=C(R,T)>0$ such that $\sup_{t\in[0,T]}|\varphi(u(t))|\le C$
whenever $u\in X_T$ satisfies $\|u\|_{X_T}\le R$.
\end{itemize}

\begin{remark}
Throughout the paper we work under the assumption $\kappa>2$ in [A1b)]. The
case $\kappa=2$ with $b$ sufficiently large, as considered in \cite{BZZ26},
could also be analyzed: how large $b$ depends on the structural
parameters of the problem, and this in turn later forces a restriction on the
admissible moments $p$ with respect to the invariant measure (see
Theorem \ref{thm_existence_invmeas}). We refer to Remark \ref{rem_k_2} for further details.
\end{remark}

For the additive noise, we denote by \textbf{[A2]} the summability condition
\begin{equation}\tag{A2}
\sum_{j=1}^\infty\gamma_j^2<\infty.
\end{equation}
In other words, we assume that $Gw$ is an $V$-valued Wiener process.

For the deterministic forcing, we denote by \textbf{[A3]} the condition
\begin{equation}\tag{A3}
f\in H.
\end{equation}

As an immediate consequence of [A1a)] and [A1c)], we obtain the following result.
\begin{lemma}\label{lemA}
Assume [A1a)] and [A1c)]. Then for every $\delta>0$ there exists
$K_\delta>0$ such that
\begin{equation}\label{A1c-ineq}
|\varphi(u)|^2\|u\|_V^2\ \le\ \delta\,|\varphi(u)|^2\|u\|_V^4
+K_\delta \|u\|_V^2\quad \text{ for every $u\in V$}
\end{equation}
\end{lemma}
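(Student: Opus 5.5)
The plan is to split $V$ into a large ball and its complement. Fix $\delta>0$ and set $R_\delta:=\delta^{-1/2}$.

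\emph{Outside the ball.} For $u\in V$ with $\|u\|_V\ge R_\delta$ we have $\delta\|u\|_V^2\ge 1$. Multiplying by $|\varphi(u)|^2\|u\|_V^2$ gives
\[
|\varphi(u)|^2\|u\|_V^2\le \delta\,|\varphi(u)|^2\|u\|_V^4 ,
\]
so \eqref{A1c-ineq} holds there with any $K_\delta\ge 0$.

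\emph{Inside the ball.} For $\|u\|_V< R_\delta$ it suffices to show that $\varphi$ is bounded on the ball, say $\sup_{\|u\|_V\le R_\delta}|\varphi(u)|\le M_\delta<\infty$. Then $|\varphi(u)|^2\|u\|_V^2\le M_\delta^2\|u\|_V^2$, and we may take $K_\delta:=M_\delta^2$.

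\emph{Boundedness of $\varphi$ on $V$-balls.} This is the only point where the hypotheses enter. I would use [A1c)] with constant-in-time paths. The obstacle is that a constant path $u(t)\equiv u$ lies in $X_T$ only when $u\in D(A)$, since $\int_0^T\|Au\|_H^2\,{\rm d}t$ must be finite. So for $u\in D(A)$ with $\|u\|_V\le R_\delta$, [A1c)] alone does not give a bound uniform in $u$. Two routes are available.

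\emph{Route 1 (via [A1c)] and the semigroup).} For $u\in V$ with $\|u\|_V\le R$, consider the path $v(t)=e^{-tA}u$. The standard energy identity gives
\[
\sup_{t}\|v(t)\|_V^2+2\int_0^T\|Av(t)\|_H^2\,{\rm d}t\le 2\|u\|_V^2 ,
\]
so $\|v\|_{X_T}\le \sqrt2 R$. By [A1c)], $|\varphi(u)|=|\varphi(v(0))|\le C(\sqrt2R,T)$. This is uniform over the $V$-ball, with no dependence on $\|u\|_{D(A)}$.

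\emph{Route 2 (via [A1a)]).} For $u\neq 0$,
\[
|\varphi(u)|\,\|u\|_V=\|g(u)\|_V\le \|g(u)-g(0)\|_V+\|g(0)\|_V .
\]
Here $g(0)=\varphi(0)\cdot 0=0$. Hence $|\varphi(u)|\|u\|_V\le C_R\|u\|_V$, which gives $|\varphi(u)|\le C_R$ on the ball of radius $R$, for $u\neq0$. At $u=0$ the left side of \eqref{A1c-ineq} vanishes, so no bound on $\varphi(0)$ is needed.

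Route 2 is cleaner and actually suffices on its own. Indeed, it directly yields $|\varphi(u)|^2\|u\|_V^2\le C_{R_\delta}^2\|u\|_V^2$ on the ball. I would present Route 2 and mention Route 1 as the way [A1c)] also gives the bound. In either case, combining the two regions gives \eqref{A1c-ineq} with $K_\delta=C_{\delta^{-1/2}}^2$.
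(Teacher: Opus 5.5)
Your proof is correct and, via Route 2, is the paper's own argument. The paper likewise uses [A1a)] with $v=0$ to get boundedness of $\varphi$ on $V$-balls, then checks the inequality separately for $\|u\|_V\ge\delta^{-1/2}$ (trivial) and $\|u\|_V<\delta^{-1/2}$ (with $K_\delta=C_{\delta^{-1/2}}^2$). Two things you add are fine but optional: your explicit handling of $u=0$ fills in a step the paper leaves implicit when it divides by $\|u\|_V$, and Route 1 is a valid alternative via [A1c)] that is not needed.
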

\begin{proof}
Assumption  [A1a)] with $v=0$ becomes
$|\varphi(u)| \|u\|_V = \|g(u) \|_V \le C_R\|u\|_V$; therefore
we get that for
every $R>0$ there exists $C_R>0$ such that
\begin{equation}\label{phi-bounded}
\sup_{\|u\|_V\le R} |\varphi(u)| \le C_R,
\end{equation}
that is $\varphi$ is bounded on bounded subsets of $V$.

Now, consider inequality \eqref{A1c-ineq}, rewritten as
\[
|\varphi(u)|^2\|u\|_V^2(1- \delta\|u\|_V^2)
\le
K_\delta \|u\|_V^2.
\]
If $1- \delta\|u\|_V^2\le 0$, then the l.h.s. is not positive and so the inequality is trivially fulfilled. On the other side, if $1- \delta\|u\|_V^2> 0$, that is $\|u\|_V<\frac 1 {\sqrt \delta}$, we use the local  boundedness \eqref{phi-bounded} of $\varphi$ to conclude that the l.h.s. is bounded:
\[
|\varphi(u)|^2\|u\|_V^2(1- \delta\|u\|_V^2)
\le
C_{1/\sqrt \delta}^2\|u\|_V^2(1- \delta\|u\|_V^2)
\le C_{1/\sqrt \delta}^2  \ \|u\|_V^2 .
\]
\end{proof}

\begin{notation}
Throughout the paper, we reserve the symbol $C$ (eventually indexed) for
constants depending on some or all of the structural parameters of the
problem; when a dependence is relevant, it is pointed out explicitly. The
value of $C$ may change within the same argument without relabelling.
\end{notation}


\section{Well-posedness result}
\label{sect:wellp}

The existence of a unique  solution, defined for every time, has been obtained in \cite{BZZ26}  for a spatial domain which can be  bounded or  unbounded, differently from \cite{HLL} where the spatial domain is assumed to be bounded. Both papers have only the superlinear noise term.
However the deterministic forcing term and the additive noise do not change the main result.

\begin{definition}
\label{global_sol}
Let $u_0$ be a $V$-valued $\mathcal F_0$-measurable random variable with
$\E\|u_0\|_V^2<\infty$. A \emph{(global) solution} to \eqref{SNS} with initial
datum $u_0$ is a process $u:[0,\infty)\times\Omega\to V $ such that
\begin{enumerate}[(i)]
\item $u$ is $\mathcal F$-adapted and has $\Po$-a.s. paths in $C([0,\infty);V)\cap L^2_{loc}(0,\infty,D(A))$, 
\item for every $t\ge0$, $\Po$-a.s.,
\begin{equation}\label{mild-global}
\begin{aligned}
u(t)&=S(t)u_0-\int_0^tS(t-r)B(u(r))\,{\rm d}r
+\int_0^tS(t-r)\, f\,{\rm d}r
\\
&+\int_0^tS(t-r)g(u(r))\,{\rm d}w_0(r)+\int_0^tS(t-r)G\,{\rm d}w(r).
\end{aligned}
\end{equation}
\end{enumerate}
A solution is \emph{unique} if, whenever $u,v$ are two solutions
with the same initial datum $u_0$, $u(t)=v(t)$ $\Po$-a.s.\ for every $t\ge0$.
\end{definition}

\begin{remark}
The solution is defined in mild form, but it also satisfies the strong
formulation in $H$,i.e.
\[
u(t)=u_0-\int_0^t\big[Au(r)+B(u(r))-f\big]\,{\rm d}r+\int_0^tg(u(r))\,{\rm d}w_0(r)+\int_0^tG\,{\rm d}w(r).
\]
For $G=0$ and $f=0$ this equivalence is proved in \cite{BZZ26} Proposition 3.6; this equivalence can be easily extended by including the deterministic forcing term and  the additive noise.
\end{remark}

This is main result on the well posedness of the 3D stochastic Navier-Stoeks equation.
\begin{theorem}
\label{ex_uniq_sol}
Assume [A1], [A2] and [A3]. Let $u_0$ be an $\mathcal F_0$-measurable $V$-valued random
variable with $\E\|u_0\|_V^2<\infty$. Then there exists a unique global solution,
in the sense of Definition \ref{global_sol} to the
stochastic Navier-Stokes equation \eqref{SNS}.
\end{theorem}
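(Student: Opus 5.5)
We follow the strategy of \cite{BZZ26}: truncation, local well-posedness, and then a non-explosion argument based on the superlinear noise. Fix $R>0$ and a smooth cut-off $\theta_R:[0,\infty)\to[0,1]$ with $\theta_R=1$ on $[0,R]$ and $\theta_R=0$ on $[2R,\infty)$. We consider the truncated equation in which $B(u)$ and $g(u)$ are replaced by $\theta_R(\|u\|_{X_t})B(u)$ and $\theta_R(\|u\|_{X_t})g(u)$, where $\|u\|_{X_t}$ is the path norm up to time $t$. The terms $\int_0^tS(t-r)f\,{\rm d}r$ and $\int_0^tS(t-r)G\,{\rm d}w(r)$ do not depend on $u$. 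By [A3] and maximal regularity of the Stokes semigroup, the first lies in $X_T$. By [A2] and the factorization/maximal inequality for stochastic convolutions in $V$ with $V$-valued Wiener process, the second lies in $\mathbb M^2(X_T)$. They therefore enter the fixed-point map only as a fixed forcing term.

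Using \eqref{prima-stime-B}, \eqref{stima-diff-B}, [A1a)] and [A1c)], the truncated map is a contraction on $\mathbb M^2(X_T)$ for small $T$ depending only on $R$. This gives a unique solution $u_R$. Standard gluing then yields a unique local solution $u$ up to the stopping times $\tau_R=\inf\{t:\|u\|_{X_t}\ge R\}$, together with a maximal time $\tau_\infty=\lim_R\tau_R$. Uniqueness of global solutions follows from local uniqueness on each $[0,\tau_R]$.

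The main step is to show $\tau_\infty=\infty$ a.s. We apply Itô's formula to $\Psi(u)=\log(1+\|u\|_V^2)$ up to $\tau_R$. Since $\langle g(u),u\rangle_V=\varphi(u)\|u\|_V^2$, the Itô correction from the multiplicative noise contributes the term
\[
\frac{|\varphi(u)|^2\|u\|_V^2}{1+\|u\|_V^2}-\frac{2|\varphi(u)|^2\|u\|_V^4}{(1+\|u\|_V^2)^2},
\]
whose negative part dominates for large $\|u\|_V$; the relevant comparison is exactly Lemma \ref{lemA}. This negative term, of order $-b^2\|u\|_V^{2\kappa}$ by [A1b)], must absorb the nonlinear contribution
\[
\frac{2|\langle B(u),Au\rangle|}{1+\|u\|_V^2}\le \frac{\|Au\|_H^2}{1+\|u\|_V^2}+C\,\frac{\|u\|_V^6}{1+\|u\|_V^2}.
\]
Here we used \eqref{prima-stime-B} and Young's inequality: $\|B(u)\|_H\|Au\|_H\le C\|Au\|_H^{3/2}\|u\|_V^{3/2}\le \tfrac12\|Au\|_H^2+C\|u\|_V^6$. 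The comparison $\|u\|_V^6/(1+\|u\|_V^2)\sim\|u\|_V^4$ against $\|u\|_V^{2\kappa}$ is where $\kappa>2$ is used.

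The new terms are harmless. The force gives $2\langle f,Au\rangle/(1+\|u\|_V^2)\le \tfrac12\|Au\|_H^2/(1+\|u\|_V^2)+C\|f\|_H^2$. The additive noise contributes an Itô correction bounded by $\sum_j\gamma_j^2$ and a martingale term. The cross-variation between $w_0$ and $w$ vanishes by independence. Taking expectations, we obtain
\[
\E\log(1+\|u(t\wedge\tau_R)\|_V^2)+\tfrac12\E\int_0^{t\wedge\tau_R}\frac{\|Au\|_H^2}{1+\|u\|_V^2}\,{\rm d}r\le C(1+t+\E\|u_0\|_V^2).
\]
One must still control $\|u\|_{X_t}$, which involves $\sup_t\|u\|_V$ and $\int\|Au\|_H^2$, rather than only the logarithm. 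For this we follow \cite{BZZ26}: a BDG estimate of the martingale terms gives control of $\E\sup_{r\le t\wedge\tau_R}\log(1+\|u\|_V^2)$, and a pathwise argument converts boundedness of $\sup\|u\|_V$ into boundedness of $\int\|Au\|_H^2$. Hence $\Po(\tau_R\le t)\le \Po(\|u\|_{X_{t\wedge\tau_R}}\ge R)\to0$ as $R\to\infty$, so $\tau_\infty=\infty$ a.s.

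I expect the main obstacle to be this last step: passing from the logarithmic Lyapunov bound to control of the full $X_T$ norm. It requires checking carefully that the extra drift $f$ and the additive noise do not spoil the BDG/pathwise part of the argument of \cite{BZZ26}. Their contributions are linear, or independent of $u$, in $\Psi$, so they should only add constants depending on $\|f\|_H$ and $\sum\gamma_j^2$.
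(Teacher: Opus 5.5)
Your proposal is correct and has the same architecture as the paper's sketch: truncation as in \cite{BZZ26}, with $f$ and $G\,{\rm d}w$ entering the fixed-point map as state-independent forcing; a local solution up to $\tau_R$; and non-explosion from a sublinear Lyapunov functional whose It\^o correction from $g(u)=\varphi(u)u$ supplies the good negative term. The non-explosion step is carried out differently, though.

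You use $\log(1+\|u\|_V^2)$ and BDG to control $\E\sup\log(1+\|u\|_V^2)$. The paper (Lemma~\ref{est_in_P}) instead uses $\cE(u)=(1+\|u\|_V^2)^{p/2}$ with $p\in(0,1)$. Its It\^o correction produces $-(1-p)\varphi(u)^2\|u\|_V^4/(1+\|u\|_V^2)^{2-p/2}$; your functional is the $p\to0$ limit of $\frac2p(\cE-1)$. The paper also controls the supremum without BDG: it uses the hitting time $\sigma_R$ of the $V$-ball and Chebyshev applied to $\E\,\cE(u(T\wedge\sigma_R\wedge\tau_M))$, which needs only the expectation bound.

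If you keep the BDG route, you must retain a fraction of the negative term $-\varphi(u)^2\|u\|_V^4/(1+\|u\|_V^2)^2$ in your expectation estimate. It is the only thing that controls the quadratic variation of the $w_0$-martingale uniformly in $R$; on $[0,\tau_R]$, [A1c)] gives only an $R$-dependent bound. Retaining it is possible because $\kappa>2$ leaves room after absorbing $C\|u\|_V^6/(1+\|u\|_V^2)$, with Lemma~\ref{lemA} handling small $\|u\|_V$. The hitting-time argument is simpler, however, and works verbatim for $\log(1+\|u\|_V^2)\ge0$, giving tails $C_T/\log(1+K^2)$. The conversion to $\int\|Au\|_H^2$ on $\{\sup\|u\|_V<K\}$ is identical in both approaches.

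What the paper's choice buys is polynomial tails $K^{-p}$. Through the dissipative form of \eqref{stimaxtight3}, it also gives the uniform-in-time bounds of Propositions~\ref{est_V_prop} and~\ref{est_H2_norm}. So the same computation also serves the Feller property and the Krylov--Bogoliubov tightness. Your logarithmic functional is marginally lighter for well-posedness alone, but it yields only logarithmic tails and $\E\log$-bounds. It would have to be replaced by the power functional for the invariant-measure part.
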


When $G=0$ and $f=0$, the proof has been done in \cite{BZZ26}. To generalize it in our  case with  an additive noise and a deterministic forcing term one can follow the strategy of \cite[\S 4--8]{BZZ26}: the details are given in Appendix \ref{proof_well_pos}.
An important observation is that the presence of the deterministic and  random additive forces do not lead to a change in the assumption for the diffusion nonlinearity $\phi$.

\section{Existence of an invariant measure and properties of its support}
\label{sect:invmeas}

In this section we prove the existence of an invariant measure as well as some properties concerning its support.
As we shall see, it will be useful to introduce some \textit{ad-hoc} notation that we illustrate hereafter.
The symbol $\mathscr{B}(V)$ denotes the $\sigma$-algebra of all Borel subsets of $V$;  moreover, for a set $A \in \mathscr{B}(V)$, we denote by $A^c$ its complement. By $\mathscr{P}(V)$ we denote the set of all probability measures on $(V,\mathscr{B}(V))$. Moreover, $\mathcal{B}_b(V)$ and $\mathcal{C}_b(V)$ will stand for the space of all measurable bounded functions from $V$ to $\mathbb{R}$ and the space of continuous bounded functions from $V$ to $\mathbb{R}$, respectively.

For every $x\in V$, the unique solution
to equation \eqref{SNS} with initial datum $x$, as given in Theorem~\ref{ex_uniq_sol}, will be denoted by $u^x$,
and for every $t\ge 0$ we set  $u(t;x):=u^x(t)$ for its value at time $t$.
We can introduce
the family of operators $P:=(P_t)_{t \ge 0}$ associated to equation \eqref{SNS} as
\begin{equation}
\label{def-Pt}
(P_t\psi)(x):= \mathbb{E}[ \psi(u^x(t))], \quad x\in V,\quad \psi\in \B_b(V).
\end{equation}
It is clear that $P_t\psi$ is bounded
for every $\psi \in \B_b(V)$. Since we know from \cite[Corollary 23]{On2005}
that the transition function is jointly measurable, that is for every $A\in\cB(V)$
the map $V \times [0,\infty)\ni  (x,t)\mapsto \mathbb{P}(u^x(t)\in A)\in \mathbb{R}$ is measurable, it follows that $P_t\psi$ is also measurable for every $\psi \in \B(V)$.
Hence $P_t$ maps $\B_b(V)$ into itself for every $t \ge0$.

Furthermore, since the unique solution of \eqref{SNS} is a $V$-valued continuous process,
then it is also a Markov process, see \cite[Theorem 27]{On2005}.
Therefore we deduce that the family of operators $\{P_t\}_{t\ge0}$ is a Markov semigroup,
namely $P_{t+s}=P_tP_s$ for every $s,t\ge0$.

We are ready to give the precise definition of invariant measure.
\begin{definition}
  An invariant measure for the transition semigroup $P$
  is a probability measure $\mu\in\mathscr{P}(V)$ such that
  \[
  \int_V \psi(x)\,\mu({\rm d} x) = \int_V P_t\psi(x)\,\mu({\rm d} x) \quad\forall\,t\geq0,\quad\forall\,\psi\in\C_b(V).
  \]
\end{definition}

\begin{remark}
\label{rem_inv}
By a standard argument one proves that invariance for all $\psi\in\C_b(V)$ is equivalent to invariance for all
$\psi\in\B_b(V)$.
\end{remark}

This is our main result.
\begin{theorem}
\label{thm_existence_invmeas}
Assume [A1], [A2] and [A3].
\begin{itemize}
\item[(i)] There exists at least one invariant measure $\mu$  for the stochastic Navier-Stokes equation \eqref{SNS}.
\item [(ii)]
Moreover, for every $p\in (0,1)$, there exists a constant $C>0$,
depending only on the structural parameters, such that for
every invariant measure $\mu$ it holds
\begin{equation}
\label{int_mu_V}
\int_{V}\|x\|_{D(A)}^p\, \mu({\rm d}x)\le C
\end{equation}
and the support of $\mu$ is contained in $D(A)$, i.e. $\mu(D(A))=1$.
\end{itemize}
\end{theorem}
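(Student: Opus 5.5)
We use the Krylov--Bogoliubov method in $V$. The first step is an Itô estimate that provides a uniform-in-time bound on the time averages of a $D(A)$-type functional. Apply Itô's formula to $\Psi(u)=\log(1+\|u\|_V^2)$, or more generally to $(1+\|u\|_V^2)^{p}$ with small $p$. Since $\kappa>2$, the superlinear noise makes the Itô correction negative and dominant. For $F(u)=\|u\|_V^2$ the generator gives
\[
-2\|Au\|_H^2-2\langle B(u),Au\rangle+2\langle f,Au\rangle+|\varphi(u)|^2\|u\|_V^2+\sum_j\gamma_j^2 .
\]
After the log transformation we get the extra term $-\dfrac{2|\varphi(u)|^2\|u\|_V^4}{(1+\|u\|_V^2)^2}$, which by [A1b)] is of order $-b^2\|u\|_V^{2\kappa}$ for large $\|u\|_V$. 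We handle the remaining terms as follows.
\begin{itemize}
\item The nonlinear term is bounded via \eqref{prima-stime-B}: $|\langle B(u),Au\rangle|\le \|Au\|_H^2/2+C\|u\|_V^6$.
\item The term $\|u\|_V^6/(1+\|u\|_V^2)$ is absorbed by $b^2\|u\|_V^{2\kappa-4}\|u\|_V^4$ times small factors. This needs $2\kappa>4$, i.e.\ $\kappa>2$, after a more careful weighting. It may be necessary to use $(1+\|u\|_V^2)^{p}$ and to compare the exponents $6$ versus $2\kappa+2$ in the numerator; I would fix the weight so that the balance $\|u\|_V^{6}\lesssim \|u\|_V^{2\kappa+2}$ holds for large $\|u\|_V$.
\item The Itô correction $|\varphi|^2\|u\|_V^2$ is controlled by Lemma \ref{lemA}.
\end{itemize}
This should yield
\[
\frac1T\int_0^T\E\,\frac{\|Au(t)\|_H^2}{(1+\|u(t)\|_V^2)^{q}}\,{\rm d}t\le C+\frac{C(1+\|x\|_V^2)}{T}
\]
for some $q\ge1$, together with a uniform-in-time bound on $\E\|u(t)\|_V^{2r}$ for $r$ small. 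Hölder's inequality (with exponents depending on $p<1$) then gives $\frac1T\int_0^T\E\|Au(t)\|_H^p\,{\rm d}t\le C(1+\|x\|_V^2/T)$.

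The second step is tightness. Since $D(A)\subset V$ compactly, Chebyshev's inequality applied to the bound above shows that the averaged measures $\mu_T=\frac1T\int_0^T P_t^*\delta_0\,{\rm d}t$ are tight in $V$. The Feller property of $P_t$ in $V$ (Theorem \ref{teo-Feller}) is obtained by continuous dependence on initial data: a localization with stopping times $\tau_R$ at level $\|u\|_{X_T}\le R$, using the local Lipschitz estimates [A1a)] and \eqref{stima-diff-B}, combined with $\Po(\tau_R<T)\to0$ uniformly for $x$ in bounded sets. Krylov--Bogoliubov then gives a limit point $\mu$ that is invariant, which proves (i).

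For (ii), let $\mu$ be any invariant measure. Integrate the averaged estimate against $\mu$, using the truncation $\|\cdot\|_{D(A)}^p\wedge N$ (which is lower semicontinuous on $V$ when set to $\infty$ off $D(A)$) and invariance:
\[
\int (\|x\|_{D(A)}^p\wedge N)\,{\rm d}\mu=\frac1T\int_0^T\!\!\int P_t(\cdot\wedge N)\,{\rm d}\mu\,{\rm d}t .
\]
The dependence on $\|x\|_V^2/T$ is problematic because $\mu$ may lack second moments. To avoid this, we use a version of the first step with initial cost $\log(1+\|x\|_V^2)$, or restrict to $\|x\|_V\le R$ and let $T\to\infty$ first; this yields \eqref{int_mu_V} uniformly. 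Letting $N\to\infty$ gives $\mu(D(A))=1$.

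The main obstacle is the first step: choosing the correct Lyapunov weight so that the superlinear Itô correction absorbs both $\|u\|_V^6$ from the nonlinearity and the forcing terms, while still leaving a full $\|Au\|_H^2$ dissipation term with a weight that allows extracting $p$-moments for every $p<1$.
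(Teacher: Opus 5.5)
There is a genuine gap in your first step: as you set it up, it does not reach every $p\in(0,1)$. Your architecture for (i) otherwise matches the paper's: Krylov--Bogoliubov started at $0$, the Feller property by localization, and tightness from a time-averaged $D(A)$-moment obtained from a Lyapunov functional whose superlinear It\^o correction dominates $\|u\|_V^6$ because $\kappa>2$.

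\textbf{The moment barrier.} For $(1+\|u\|_V^2)^{s/2}$, the multiplicative noise contributes
$\frac s2\varphi(u)^2\big(\|u\|_V^2-(1-s)\|u\|_V^4\big)(1+\|u\|_V^2)^{s/2-2}$.
This has the dominating negative sign only when $s<1$. So the uniform moments of $\|u(t)\|_V$ that this method produces are of order strictly less than $1$.

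\textbf{Where your H\"older step fails.} You start from the weight $(1+\|u\|_V^2)^{-q}$ with $q\ge1$, which is what the logarithm gives, and write
$\|Au\|_H^p=\big(\|Au\|_H^2(1+\|u\|_V^2)^{-q}\big)^{p/2}(1+\|u\|_V^2)^{qp/2}$.
This requires a uniform (or time-averaged) bound on $\E\|u(t)\|_V^{2qp/(2-p)}$. Since $2qp/(2-p)\ge 2p/(2-p)$, that order is below $1$ only when $p<2/3$. Taking the Lyapunov exponent ``small'', as you suggest, keeps $q$ near $1$ and does not help.

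\textbf{The missing idea.} Use the Lyapunov functional $(1+\|u\|_V^2)^{p/2}$ with the \emph{same} $p$ you want for the $D(A)$-moment. This gives
$\frac p2\nu\,\E\int_0^t\|Au\|_H^2(1+\|u\|_V^2)^{p/2-1}\,{\rm d}s\le(1+\|x\|_V^2)^{p/2}+Ct$.
The Poincar\'e inequality $\|u\|_V^2\le\lambda_1^{-1}\|Au\|_H^2$ then yields the pointwise bound $\|Au\|_H^p\le C+C\|Au\|_H^2(1+\|u\|_V^2)^{p/2-1}$. No H\"older step and no extra $V$-moment are needed; this is Proposition \ref{est_H2_norm}. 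Your H\"older route can also be rescued, but only by taking the Lyapunov exponent $s<1$ close enough to $1$ that $(2-s)p/(2-p)<1$.

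\textbf{Part (ii).} Your first option, an initial cost $\log(1+\|x\|_V^2)$, has exactly the defect you noticed for $\|x\|_V^2$. Nothing guarantees $\int_V\log(1+\|x\|_V^2)\,\mu({\rm d}x)<\infty$ for an arbitrary invariant $\mu$.

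Your second option is correct and is a genuinely different route from the paper's. Split the $\mu$-integral at $\|x\|_V=R$ and use invariance with the time-averaged bound:
$\int_V(\|x\|_{D(A)}^p\wedge N)\,\mu({\rm d}x)\le C+C(1+R^2)/T+N\,\mu(\|x\|_V>R)$.
Let $T\to\infty$, then $R\to\infty$ (legitimate because $N$ is fixed), then $N\to\infty$. This gives \eqref{int_mu_V} with $C$ independent of $\mu$, and hence $\mu(D(A))=1$.

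The paper instead first shows $\int_V\|x\|_V^p\,\mu({\rm d}x)\le\bar C_1$. It does this from the exponentially decaying estimate of Proposition \ref{est_V_prop} and a reverse Fatou argument on bounded truncations. Only then does it average over $[0,1]$ using Proposition \ref{est_H2_norm}. Your localization dispenses with the a priori $V$-moment of $\mu$ and with the decay in time, at the price of a long-time average. It needs only the repaired time-averaged $D(A)$ bound.
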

In what follows we will prove at first statement (i): the proof of the existence of an invariant measure relies on the Krylov–Bogoliubov Theorem, which is based on the Feller property of the  semigroup $P$  and the tightness property for time-averaged laws. We will then prove statement (ii) exploiting suitable apriori bounds on the solution process.

\subsection{Feller property}
In this section we prove the Feller property in $V$, that is $P_t: \C_b(V) \to  \C_b(V)$ for every $t\ge 0$.
This is a kind of continuity result with respect to the initial velocity.
\begin{theorem}\label{teo-Feller}
    Assume  [A1], [A2] and [A3]. Then the semigroup $P$ given in \eqref{def-Pt} is Feller.
\end{theorem}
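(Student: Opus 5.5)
To prove that $P_t\psi\in\C_b(V)$ for every $\psi\in\C_b(V)$ and $t>0$, it is enough to show continuous dependence in probability on the initial datum. Precisely, if $x_n\to x$ in $V$, then $u(t;x_n)\to u(t;x)$ in probability in $V$. Granting this, every subsequence of $\psi(u(t;x_n))$ has a further subsequence converging a.s. to $\psi(u(t;x))$. Dominated convergence then gives $P_t\psi(x_n)\to P_t\psi(x)$. Boundedness of $P_t\psi$ is trivial, and measurability was already discussed after \eqref{def-Pt}.

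The convergence in probability will come from a localization (stopping time) argument, since only local Lipschitz bounds are available. Fix $T\ge t$ and $R>0$ with $\|x\|_V<R$, so that $\|x_n\|_V\le R$ for $n$ large. Define
\[
\tau_R^n:=\inf\{s\ge0: \|u^{x_n}\|_{X_s}\ge R\}\wedge\inf\{s\ge 0:\|u^x\|_{X_s}\ge R\}\wedge T.
\]
Set $v_n:=u^{x_n}-u^x$. The additive noise and the forcing $f$ cancel in the difference, so
\[
v_n(s)=S(s)(x_n-x)-\int_0^s S(s-r)[B(u^{x_n})-B(u^x)]\,{\rm d}r+\int_0^sS(s-r)[g(u^{x_n})-g(u^x)]\,{\rm d}w_0(r).
\]
On $[0,\tau_R^n]$ both processes have $X_s$-norm at most $R$. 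I would then estimate $\E\|v_n\|_{X_{\tau_R^n}}^2$ using three ingredients: the maximal regularity of $S$ in $V$, namely $\|\int_0^\cdot S(\cdot-r)h\,{\rm d}r\|_{X_\tau}\le C\|h\|_{L^2(0,\tau;H)}$; the analogous stochastic maximal inequality for stochastic convolutions with $V$-valued integrands; and the bounds \eqref{stima-diff-B} together with [A1a)]. For the nonlinear term, \eqref{stima-diff-B} and Young's inequality give
\[
\|B(u^{x_n})-B(u^x)\|_H^2\le \epsilon\|v_n\|_{D(A)}^2+C_{\epsilon,R}\bigl(1+\|u^{x_n}\|^2_{D(A)}\bigr)\|v_n\|_V^2 .
\]
The $\epsilon$-term is absorbed into the left-hand side. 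The remaining term carries the random weight $\|u^{x_n}\|_{D(A)}^2$, which is integrable in time on $[0,\tau_R^n]$ with integral at most $R^2$. To handle it, I would either partition $[0,\tau_R^n]$ into finitely many random subintervals on which the integral of the weight is small and iterate, or use a stochastic Gronwall lemma with the weight $\exp(-C\int_0^s(1+\|u^{x_n}\|_{D(A)}^2){\rm d}r)$, which is bounded below by $e^{-C(T+R^2)}$. Either route yields
\[
\E\|v_n\|^2_{X_{\tau_R^n}}\le C(R,T)\|x_n-x\|_V^2 .
\]
Handling this random weight in the Gronwall step is where I expect the main technical obstacle, since the Lipschitz constant of $B$ involves the $D(A)$-norm, which is controlled only in $L^2$ in time. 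The multiplicative term is controlled directly by [A1a)] with constant $C_R$.

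Finally, the localization has to be removed. I would write
\[
\Po(\|v_n(t)\|_V>\eta)\le \Po(\|v_n\|_{X_{\tau^n_R}}>\eta)+\Po(\tau_R^n<T).
\]
The first probability tends to $0$ by Chebyshev and the estimate above. For the second, $\Po(\tau_R^n<T)\le \Po(\|u^x\|_{X_T}\ge R)+\Po(\|u^{x_n}\|_{X_T}\ge R,\ \|v_n\|_{X_{\tau_R^n}}\text{ small})+\Po(\|v_n\|_{X_{\tau_R^n}}>1)$. More cleanly, use the stopping time for $u^x$ at level $R$, together with the stopping time for $u^{x_n}$ at level $R+1$, on the event that $\|v_n\|_{X}$ stays below $1$; continuity of $s\mapsto \|u\|_{X_s}$ then shows that $u^{x_n}$ cannot exit before $u^x$ reaches $R$. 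Thus $\limsup_n\Po(\tau^n<T)\le \Po(\|u^x\|_{X_T}\ge R)$, which tends to $0$ as $R\to\infty$ because $u^x\in X_T$ a.s. by Theorem \ref{ex_uniq_sol}. This gives the convergence in probability and hence the Feller property.
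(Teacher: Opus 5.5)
Your proof is correct. It has the same skeleton as the paper's: reduce to convergence in probability of $u^{x_n}(t)$, localize, prove a mean-square Lipschitz bound up to the stopping time, then remove the localization. However, you carry out both technical steps differently.

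\textbf{The localized estimate.} The paper applies It\^o's formula to $\|u^x-u^y\|_V^2$ rather than using the mild form and maximal regularity. It also never meets the random weight you single out as the main obstacle. It keeps $\|Au^x\|_H$ to the first power in \eqref{stima-diff-B} and uses Cauchy--Schwarz in time:
$\int_0^{\tau}\|Au^x\|_H\|r\|_V^2\,{\rm d}s\le M^{1/2}\bigl(\sup_{[0,\tau]}\|r\|_V^2\int_0^\tau\|r\|_V^2\,{\rm d}s\bigr)^{1/2}$.
After Young's inequality only a deterministic Gronwall argument remains. You could use the same trick in your mild setting instead of a stochastic Gronwall lemma or random partitions.

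\textbf{Removing the localization.} The paper proves $\lim_{M\to\infty}\sup_n\Po(\tau^{x_n,x}_M<T)=0$ (Lemma \ref{l2}). This comes from the a priori bounds in probability of Lemma \ref{est_in_P}, which are uniform over bounded sets of initial data. Those bounds rest on the Lyapunov functional $(1+\|u\|_V^2)^{p/2}$ and on the superlinear noise.

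Your two-level argument uses level $R$ for $u^x$ and level $R+1$ for $u^{x_n}$, together with the triangle inequality in $X_s$ and continuity of $s\mapsto\|u\|_{X_s}$. It needs only $u^x\in X_T$ a.s., so it is more elementary and independent of the noise structure. The price is that your localized bound must control the full $X_\tau$-norm of $v_n$, including $\int_0^\tau\|Av_n\|_H^2\,{\rm d}s$. Lemma \ref{l1} only bounds the supremum of the $V$-norm, although the It\^o computation would give the rest if the dissipative term were kept; your maximal-regularity route does deliver the full norm.

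\textbf{Cleanup.} Drop your first, muddled bound on $\Po(\tau_R^n<T)$. Apply the Lipschitz estimate directly to the stopping time with levels $R$ and $R+1$, with constant $C(R+1,T)$.
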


The proof of the above result is based on two preliminary results stated as Lemmata below.
Given any two initial velocities $x, y$ in $V$ and a positive constant $M$
we introduce
the stopping times
 \begin{equation}
 \label{tau_M}
\tau^{x} _M=\inf\{t\in [0, + \infty):
\sup_{0\le s\le t} \|u^{x}(s)\|^2_V
+\int_0^t \|Au^{x}(r)\|_H^2{\rm d}r \ge M \},
    \end{equation}
 \begin{equation}\label{def-tau-n-M}
\tau^{x,y} _M=\min (\tau^{x} _M, \tau^{y} _M).
    \end{equation}

The first result provides a local continuity result in mean square.
\begin{lemma}\label{l1}
Consider two solutions $u^x$ and $u^y$ of the Navier-Stokes equation \eqref{SNS} with initial
velocities  $x,y\in V$, respectively, and the stopping time $\tau_M^{x,y}$
defined in \eqref{def-tau-n-M}.
for every $M>0$
there exists  a positive constant $C_M$  such that, for ant $T>0$ we have
\begin{equation}\label{differenza-in-media}
    \E\Big[ \sup_{t\in [0,T\wedge \tau_M^{x,y}]}\|u^x(t)-u^y(t)\|_V^2\Big]
    \le C_M \|x-y\|_V^2.
\end{equation}
\end{lemma}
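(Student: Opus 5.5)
The plan is to apply It\^o's formula to $\|w(t)\|_V^2=\|A^{1/2}w(t)\|_H^2$, where $w:=u^x-u^y$, up to the stopping time $\tau:=T\wedge\tau_M^{x,y}$. Since both solutions carry the same additive noise $G\,{\rm d}w$ and the same forcing $f$, these cancel, and $w$ satisfies
\[
{\rm d}w+\big[Aw+B(u^x)-B(u^y)\big]{\rm d}t=\big(g(u^x)-g(u^y)\big){\rm d}w_0, \qquad w(0)=x-y .
\]
It\^o's formula in $V$ is justified by the strong formulation, because $u^x,u^y\in L^2_{loc}(0,\infty;D(A))$ (a Galerkin approximation or a standard It\^o formula for the Gelfand triple $D(A)\subset V\subset H$ can be used). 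It gives
\[
\|w(t)\|_V^2+2\int_0^t\|Aw\|_H^2\,{\rm d}r=\|x-y\|_V^2-2\int_0^t\langle B(u^x)-B(u^y),Aw\rangle\,{\rm d}r+\int_0^t\|g(u^x)-g(u^y)\|_V^2\,{\rm d}r+2\int_0^t\langle g(u^x)-g(u^y),w\rangle_V\,{\rm d}w_0 .
\]

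Next I estimate the drift terms for $r\le\tau$, where $\|u^x(r)\|_V^2,\|u^y(r)\|_V^2\le M$ and $\int_0^\tau(\|Au^x\|_H^2+\|Au^y\|_H^2)\,{\rm d}r\le 2M$.

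For the nonlinear term I use \eqref{stima-diff-B} with $u=u^x$, $v=u^y$:
\[
|\langle B(u^x)-B(u^y),Aw\rangle|\le C\|u^x\|_{D(A)}^{1/2}\|u^x\|_V^{1/2}\|w\|_V\|Aw\|_H+C\|w\|_{D(A)}^{3/2}\|w\|_V^{1/2}\|u^y\|_V .
\]
Young's inequality then bounds this by
\[
\tfrac12\|Aw\|_H^2+C_M\big(1+\|u^x\|_{D(A)}\big)\|w\|_V^2+C_M\|w\|_V^2 ,
\]
using exponents $4/3$ and $4$ in the second piece; the $\|w\|_H$ part of the $D(A)$ norm is controlled by the Poincar\'e inequality. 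Thus the nonlinear contribution becomes $\int_0^t\eta(r)\|w(r)\|_V^2\,{\rm d}r$ with
\[
\eta(r)=C_M\big(1+\|u^x(r)\|_{D(A)}+\|u^x(r)\|_{D(A)}^2\big),
\]
and $\int_0^\tau\eta\,{\rm d}r\le C_M(1+T)$. The stochastic correction is handled by [A1a)] with $R=\sqrt M$: $\|g(u^x)-g(u^y)\|_V^2\le C_M\|w\|_V^2$.

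The main obstacle is the random weight $\eta$. Its time integral is bounded only pathwise, so Gronwall cannot be applied directly in expectation. I would handle it with a stochastic Gronwall argument: apply It\^o's formula to $e^{-\int_0^t\eta(r){\rm d}r}\|w(t)\|_V^2$. The exponential factor absorbs the $\eta$ term, and the remaining martingale part is controlled by Burkholder--Davis--Gundy:
\[
\E\sup_{s\le t\wedge\tau}\Big|\int_0^s e^{-\int_0^r\eta}\langle g(u^x)-g(u^y),w\rangle_V\,{\rm d}w_0\Big|\le \tfrac12\,\E\sup_{s\le t\wedge\tau}e^{-\int_0^s\eta}\|w(s)\|_V^2+C_M\int_0^t\E\sup_{s\le r\wedge\tau}e^{-\int_0^s\eta}\|w(s)\|_V^2\,{\rm d}r .
\]
Deterministic Gronwall then yields
\[
\E\sup_{t\le\tau}e^{-\int_0^t\eta}\|w(t)\|_V^2\le C\|x-y\|_V^2 .
\]
Since $e^{-\int_0^\tau\eta}\ge e^{-C_M(1+T)}$, the exponential can be removed, giving \eqref{differenza-in-media}.

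The constant obtained this way depends on $T$ as well as on $M$. Since $\int_0^\tau\|Au^x\|_H^2\le M$ holds uniformly in $T$, I would keep only the $D(A)$-dependent parts in $\eta$, whose integral is at most $C_M$ independently of $T$. This is possible because $\|w\|_V^2\le\lambda_1^{-1}\|Aw\|_H^2$ lets the pure $C_M\|w\|_V^2$ terms be absorbed into the dissipation when their constant is small relative to $\lambda_1$. If that absorption fails for large $M$, I would accept a constant $C_{M,T}$, which still suffices for the Feller property.
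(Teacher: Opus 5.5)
Your proof is correct. Its skeleton is the paper's: It\^o's formula for $\|u^x-u^y\|_V^2$, the bound \eqref{stima-diff-B}, [A1a)] on the ball of radius $\sqrt M$, Burkholder--Davis--Gundy, and Gronwall. The real difference is how you get rid of the random weight $\|Au^x\|_H\|u^x\|_V$ that multiplies $\|r\|_V^2$, where $r=u^x-u^y$.

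You absorb it with the integrating factor $e^{-\int_0^t\eta}$. At the end you remove the factor using the deterministic bound $\int_0^{T\wedge\tau_M^{x,y}}\eta\le C_M(1+T)$.

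The paper uses no weight. Pathwise on $[0,t\wedge\tau_M^{x,y}]$ it estimates
\[
\int\|Au^x\|_H\|u^x\|_V\|r\|_V^2\,{\rm d}s\le M\Big(\sup_s\|r(s)\|_V^2\int\|r\|_V^2\,{\rm d}s\Big)^{1/2}\le \varepsilon M\sup_s\|r(s)\|_V^2+C_\varepsilon M\int\|r\|_V^2\,{\rm d}s .
\]
It then absorbs the supremum into the left-hand side together with the BDG contribution. What remains is $\E\sup_{[0,T\wedge\tau_M^{x,y}]}\|r\|_V^2\le 2\|x-y\|_V^2+C\int_0^T\E\sup_{[0,s\wedge\tau_M^{x,y}]}\|r\|_V^2\,{\rm d}s$, and plain Gronwall applies.

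What each route buys:
\begin{itemize}
\item The paper's trick is shorter. It needs the weight to be square integrable up to the stopping time, which holds here since $\int_0^{\tau_M^{x,y}}\|Au^x\|_H^2\|u^x\|_V^2\,{\rm d}s\le M^2$.
\item Your integrating factor needs only an $L^1$ bound on the weight. It is the more robust device, and it is essentially what the stochastic Gronwall lemma invoked in the paper encodes.
\end{itemize}

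Both routes give a constant of the form $C_Me^{C_MT}$. So your fallback to $C_{M,T}$ is exactly what the paper's own argument delivers, whatever the quantifier order in the statement suggests. It is also all the Feller proof uses, since $T$ is fixed there.

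You should simply drop the attempt to make the constant $T$-independent by absorbing terms into the dissipation. The Lipschitz term $\|g(u^x)-g(u^y)\|_V^2\le C_M^2\|r\|_V^2$ carries a constant weight whose integral grows like $T$. In general it is not small compared with $\lambda_1$: think of $\varphi$ equal to a large constant near $0$.
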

\begin{proof}
The proof is inspired by \cite[\S 6]{BZZ26} and \cite[\S 4.5]{HLL}.
\\
Set $r := u^x-u^y$. Then, $r$ satisfies the following equation
\[
\begin{cases}
{\rm d} r(t) + \nu A r(t)\,{\rm d}t = \big(B(u^x(t))-B(u^y(t))\big)\,{\rm d}t + \big(g(u^x(t))-g(u^y(t))\big)\,{\rm d}w_0(t),\\
r(0) = y - x.
\end{cases}
\]
We apply the It\^o formula to $\|r\|_V^2$ and obtain, for every $t \ge0$, $\mathbb{P}$-a.s.
\begin{equation*}\begin{aligned}
    {\rm d}\|r(t)\|_V^2
    &+ 2\nu\|Ar(t)\|_H^2\, {\rm d}t
    = 2\big(Ar(t),B(u^x(t))-B(u^y(t))\big)_H\,{\rm d}t
    \\
&+ 2\big(r(t),g(u^x(t))-g(u^y(t))\big)_V\,{\rm d}w_0(t)
+ \big\|g(u^x(t))-g(u^y(t))\big\|_V^2\,{\rm d}t.
\end{aligned}\end{equation*}
We  estimate by means of the Cauchy and Young inequalities
\[
\big(Ar,B(u^x)-B(u^y)\big)_H
\le
\|Ar\|_H \|B(u^x)-B(u^y)\|_H
\le
\frac \nu 2 \|Ar\|^2_H +\frac 1{2\nu} \|B(u^x)-B(u^y)\|^2_H
\]
and using \eqref{stima-diff-B} we further obtain
\[\begin{split}
\|B(u^x)-B(u^y)\|^2_H
&\le
C\|u^x\|_{D(A)}\|u^x\|_V\|r\|_V^2
+ C \|r\|_{D(A)}\|r\|_V \|u^y\|_V^2.
\\
&\\
&\le C\|A u^x\|_H \|u^x\|_V \|r\|_V^2
+  \frac \nu 2\|Ar\|_H^2  +C_\nu  \|r\|^2_V \|u^y\|_V^4.
\end{split}\]
Inserting these estimates in the  It\^o's formula above, we obtain
\begin{equation*}\begin{aligned}
    {\rm d}\|r(t)\|_V^2
    &+ \nu\|Ar(t)\|_H^2\, {\rm d}t
    \le C_\nu (\|A u^x(t)\|_H \|u^x(t)\|_V \|r(t)\|_V^2
+  \|r(t)\|^2_V \|u^y(t)\|_V^4) \,{\rm d}t
    \\
    &\\
&+ 2\big(r(t),g(u^x(t))-g(u^y(t))\big)_V\,{\rm d}w_0(t)
+ \big\|g(u^x(t))-g(u^y(t))\big\|_V^2\,{\rm d}t.
\end{aligned}\end{equation*}
and neglecting the second term in the l.h.s. we get
\begin{equation*}\begin{aligned}
  {\rm d}\|r(t)\|_V^2
    &\le C_\nu (\|A u^x(t)\|_H \|u^x(t)\|_V \|r(t)\|_V^2
+  \|r(t)\|^2_V \|u^y(t)\|_V^4) \,{\rm d}t
+ \big\|g(u^x(t))-g(u^y(t))\big\|_V^2\,{\rm d}t
\\&\\
&+ 2\big(r(t),g(u^x(t))-g(u^y(t))\big)_V\,dw_0(t).
\end{aligned}\end{equation*}
Now we integrate on the time interval $[0,t\wedge \tau^{x,y}_M]$. Keeping in mind that
\[
\sup_{s\in [0,t\wedge \tau^{x,y}_M]}\|u^x(s)\|_V^p\le M^{p/2},
\qquad
\sup_{s\in [0,t\wedge \tau^{x,y}_M]}\|u^y(s)\|_V^p\le M^{p/2},
\]
\[
\int_0^{t\wedge \tau^{x,y}_M}\|Au^x(s)\|_H^2 {\rm d}s\le M,
\qquad
\int_0^{t\wedge \tau^{x,y}_M}\|Au^y(s)\|_H^2 {\rm d}s\le M,
\]
we estimate the deterministic integrals  in the r.h.s.  as follows:
\[\begin{split}
\int_0^{t\wedge \tau^{x,y}_M}
\|A u^x(s)\|_H \|u^x(s)\|_V \|r(s)\|_V^2 {\rm d}s
&\le
 M^{1/2}    \int_0^{t\wedge \tau^{x,y}_M}
\|A u^x(s)\|_H  \|r(s)\|_V^2 {\rm d}s
\\
&\le
 M^{1/2}   \left( \int_0^{t\wedge \tau^{x,y}_M}
\|A u^x(s)\|^2_H  {\rm d}s\right)^{1/2}
\left( \int_0^{t\wedge \tau^{x,y}_M} \|r(s)\|_V^4 {\rm d}s\right)^{1/2}
\\
&\le M  \left(\sup_{s\in [0,t\wedge \tau^{x,y}_M]} \|r(s)\|^2_V  \int_0^{t\wedge \tau^{x,y}_M} \|r(s)\|_V^2 {\rm d}s\right)^{1/2}
\\
&\le
M \varepsilon \sup_{s\in [0,t\wedge \tau^{x,y}_M]} \|r(s)\|^2_V
+
M C_{\varepsilon} \int_0^{t\wedge \tau^{x,y}_M} \|r(s)\|_V^2 {\rm d}s
\\
\intertext{and}
\int_0^{t\wedge \tau^{x,y}_M}
 \|r(s)\|^2_V \|u^y(s)\|_V^4
{\rm d}s
&\le
M^2 \int_0^{t\wedge \tau^{x,y}_M}
 \|r(s)\|^2_V {\rm d}s
\\
\intertext{and by the local Lipschitz assumption [A1a)]}
\int_0^{t\wedge \tau^{x,y}_M}
\|g(u^x(s))-g(u^y(s))\|_V^2
{\rm d}s
&\le
C_M^2 \int_0^{t\wedge \tau^{x,y}_M}
\|r(s)\|_V^2 {\rm d}s.
\end{split}\]
Therefore,  choosing $\varepsilon $ in a suitable way we get that  there exists  a positive constant $C_{M,\nu}$  such that
\begin{equation}\label{stima-senza-stochint}
\begin{aligned}
    \|r(t\wedge \tau^{x,y}_M)\|_V^2
    &\le
    \|r(0)\|_V^2 + \frac 14 \sup_{s\in [0,t\wedge \tau^{x,y}_M]} \|r(s)\|^2_V
    +C_{M,\nu} \int_0^{t\wedge \tau^{x,y}_M} \|r(s)\|_V^2\, {\rm d}s
\\
    &+2\int_0^{t\wedge \tau^{x,y}_M}
     \big(r(s),g(u^x(s))-g(u^y(s))\big)_V\,{\rm d}w_0(s).
\end{aligned}
\end{equation}

As far as the stochastic integral is concerned, by applying  the  Burkholder-Davis-Gundy inequality
we infer that 
\begin{align*}
&\E  \left|\sup_{t\in[0,T]}
\int_0^{t\wedge\tau_M^{x,y}}
\big(r(s),g(u^x(s))-g(u^y(s))\big)_V\,{\rm d}w_0(s)\right|
\\
&\le
\mathbb{E}\left(\int_0^{T\wedge\tau_M^{x,y}}\|r(s)\|_V^2\,\|g(u^x(s))-g(u^y(s))\|_V^2\,{\rm d}s\right)^{1/2}
\\
&\le \mathbb{E}\left(\int_0^{T\wedge\tau_M^{x,y}} C_M^2 \|r(s)\|_V^4\,{\rm d}s\right)^{1/2}
\quad \text{ by [A1a]}\\
& \le C_M  \mathbb{E}\left(
\sup_{s\in [0,T\wedge \tau^{x,y}_M]}  \|r(s)\|_V^2 \int_0^{T\wedge\tau_M^{x,y}}  \|r(s)\|_V^2\,{\rm d}s\right)^{1/2}
\\
& \le \tfrac 14  \mathbb{E}
\sup_{s\in [0,T\wedge \tau^{x,y}_M]}  \|r(s)\|_V^2
+  C \E \int_0^{T\wedge\tau_M^{x,y}}  \|r(s)\|_V^2\,{\rm d}s.
\end{align*}
Going back to \eqref{stima-senza-stochint}, we  consider first the supremum in time on $[0,T]$
 ($T$ finite and  fixed arbitrarily) and then the mathematical expectation in both sides;  we obtain
\[
\tfrac 12\E \sup_{t\in [0,T\wedge \tau^{x,y}_M]}
 \|r(t)\|_V^2
    \le
    \|r(0)\|_V^2
    +C\  \E \int_0^{T\wedge \tau^{x,y}_M} \|r(s)\|_V^2 {\rm d}s.
\]
From the stochastic Gronwall lemma (see \cite[Lemma 5.3]{GHZ}) we infer that there exists a constant $C_M$ (depending also on $\nu$) such that
\[
\mathbb{E}\sup_{t\in[0,T\wedge\tau_M^{x,y}]}\|u^x(t)-u^y(t)\|_V^2 \le C_M\,\|x-y\|_V^2.
\]
\end{proof}

\begin{lemma}\label{l2}
Consider a  $V$-converging sequence of initial velocities for the Navier-Stokes equation \eqref{SNS}:
$\displaystyle \lim_{n\to\infty}\|x_n-x\|_V=0$.  For  the sequence of stopping times
$\tau^{x_n,x} _M$ defined in \eqref{def-tau-n-M}, for every  finite time $T>0$
we have that
\begin{equation}\label{conv-stopping-times-n-M}
\lim_{M\to \infty} \sup_n \Po(\tau^{x_n,x}_M<T)=0.
\end{equation}
\end{lemma}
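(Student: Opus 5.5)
The plan is to reduce the claim to a uniform-in-the-initial-datum a priori bound on the solution in the path space $X_T$, and then conclude with Chebyshev's inequality. By definition \eqref{tau_M}, the event $\{\tau^{x}_M<T\}$ is contained in
\[
\Big\{\sup_{0\le s\le T}\|u^{x}(s)\|_V^2+\int_0^T\|Au^x(r)\|_H^2\,{\rm d}r\ge M\Big\}.
\]
Hence
\[
\Po(\tau^{x_n,x}_M<T)\le \Po(\tau^{x_n}_M<T)+\Po(\tau^{x}_M<T).
\]
It therefore suffices to find a function $\Phi:[0,\infty)\to[0,\infty)$, nondecreasing and finite, together with some moment (possibly a small power $q\in(0,1]$, or a logarithm), such that
\[
\E\Big[\Big(\sup_{0\le s\le T}\|u^{y}(s)\|_V^2+\int_0^T\|Au^y(r)\|_H^2\,{\rm d}r\Big)^q\Big]\le \Phi(\|y\|_V)
\]
for every deterministic $y\in V$. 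Since $\|x_n\|_V\le R:=\sup_n\|x_n\|_V<\infty$ by convergence, we get
\[
\sup_n\Po(\tau^{x_n,x}_M<T)\le 2\Phi(R)M^{-q}\to0.
\]

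To obtain the bound, I would follow the energy argument of \cite[\S 5]{BZZ26}, adapted as in Appendix \ref{proof_well_pos}. Apply the It\^o formula to $\Psi(\|u\|_V^2)$, where $\Psi(r)=\log(1+r)$, up to the localizing times $\tau^y_N$. The key mechanism is the superlinear noise. In the It\^o correction for $\log(1+\|u\|_V^2)$, the negative second-order term
\[
-\frac{2(u,g(u))_V^2}{(1+\|u\|_V^2)^2}=-\frac{2\varphi(u)^2\|u\|_V^4}{(1+\|u\|_V^2)^2}
\]
dominates the positive term $\|g(u)\|_V^2/(1+\|u\|_V^2)$ at large norms, except for a part controlled by Lemma \ref{lemA}. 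Together with [A1b)] ($\kappa>2$), it also absorbs the nonlinear contribution
\[
\frac{2|(Au,B(u))|}{1+\|u\|_V^2}\le \frac{\nu\|Au\|_H^2}{1+\|u\|_V^2}+C\frac{\|u\|_V^6}{1+\|u\|_V^2},
\]
which comes from \eqref{prima-stime-B} and Young's inequality. The extra terms from $f$ and $G$ are easy: $2(Au,f)\le\frac\nu2\|Au\|_H^2+C\|f\|_H^2$, and the It\^o term of the additive noise is bounded by $\sum_j\gamma_j^2$ by [A2]. Both are divided by $1+\|u\|_V^2$, so they contribute at most a constant times $T$. The mixed quadratic variation of the two noises is handled by Young's inequality.

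The outcome is an inequality of the form
\[
\log(1+\|u(t)\|_V^2)+\nu\int_0^t\frac{\|Au\|_H^2}{1+\|u\|_V^2}\,{\rm d}s\le \log(1+\|y\|_V^2)+CT+\mathcal M_t,
\]
where $\mathcal M$ is a martingale whose quadratic variation is bounded by a constant times $t$. Its integrand is $2\varphi(u)\|u\|_V^2/(1+\|u\|_V^2)$, which is compensated by the same negative term. A BDG (or exponential martingale) estimate then gives uniform control of $\Po\big(\sup_{t\le T}\|u(t)\|_V^2\ge K\big)$ and of $\int_0^T\frac{\|Au\|_H^2}{1+\|u\|_V^2}\,{\rm d}s$, depending only on $\|y\|_V$ and $T$.

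The main obstacle is passing from these bounds to the unweighted quantity $\int_0^T\|Au\|_H^2\,{\rm d}s$. I would split on the event $\{\sup_{t\le T}\|u(t)\|_V^2\le K\}$, on which
\[
\int_0^T\|Au\|_H^2\,{\rm d}s\le (1+K)\int_0^T\frac{\|Au\|_H^2}{1+\|u\|_V^2}\,{\rm d}s.
\]
This gives
\[
\Po(\tau^y_M<T)\le \Po\Big(\sup_{t\le T}\|u\|_V^2>K\Big)+\Po\Big((1+K)\int_0^T\frac{\|Au\|_H^2}{1+\|u\|_V^2}\,{\rm d}s\ge M/2\Big).
\]
Choosing first $K$ large and then $M$ large makes both terms small uniformly for $\|y\|_V\le R$, which proves \eqref{conv-stopping-times-n-M}. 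Finally, the localization by $\tau^y_N$ is removed by Fatou's lemma, since $\tau^y_N\to\infty$ a.s. by Theorem \ref{ex_uniq_sol}.
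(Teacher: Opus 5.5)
Your overall route is the paper's. The paper proves the statement by a union bound over $x$ and $x_n$ plus the tail estimates of Lemma~\ref{est_in_P}, and those are obtained as you propose:
\begin{itemize}
\item[(a)] a concave functional of $\|u\|_V^2$ whose It\^o correction, thanks to the superlinear noise with $\kappa>2$, makes the drift dissipative;
\item[(b)] uniform control of $\sup_{t\le T}\|u(t)\|_V$ and of a weighted integral of $\|Au\|_H^2$;
\item[(c)] the split on $\{\sup_{t\le T}\|u(t)\|_V<R\}$ to recover $\int_0^T\|Au\|_H^2\,{\rm d}s$.
\end{itemize}
The only difference is the functional. The paper uses $(1+\|u\|_V^2)^{p/2}$, $p\in(0,1)$, which gives tails $C/R^p$. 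Your $\log(1+\|u\|_V^2)$ is the endpoint $p\to 0$ and works just as well. With $X=\|u\|_V^2$, the two $w_0$-contributions to the drift combine into $\varphi(u)^2X(1-X)(1+X)^{-2}$. For large $X$ this is about $-\varphi(u)^2\le -b^2X^\kappa$, which beats the term $CX^2$ coming from $B$.

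One claim, however, is false. The quadratic variation of $\mathcal M$ is not bounded by a constant times $t$, because its $w_0$-integrand $2\varphi(u)X/(1+X)$ grows like $|\varphi(u)|\ge b\|u\|_V^\kappa$. A BDG bound resting on that claim does not hold. What is true is that part of the negative It\^o term survives the cancellation:
\[
\varphi^2\tfrac{X(1-X)}{(1+X)^2}+\tfrac18\cdot\tfrac{4\varphi^2X^2}{(1+X)^2}=\varphi^2\tfrac{X-X^2/2}{(1+X)^2},
\]
and this is still bounded above by $C-cb^2X^\kappa$. So what you actually get is
\[
\log(1+X(t))+\tfrac\nu2\int_0^t\tfrac{\|Au\|_H^2}{1+X}\,{\rm d}s\le\log(1+\|y\|_V^2)+Ct+\mathcal M_t-\tfrac18\langle\mathcal M\rangle_t .
\]
From this, the supremum is controlled either by the exponential martingale inequality, or by BDG together with $\E\langle\mathcal M\rangle_T\le 8(\log(1+\|y\|_V^2)+CT)$, obtained by localizing and taking expectations.

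Simpler still is the paper's device, which needs no bound on $\langle\mathcal M\rangle$ at all. Stop at $\sigma_K=\inf\{t:\|u(t)\|_V^2\ge K\}$ and at the localizing time, and use only that the stopped martingale has zero mean. This gives directly $\log(1+K)\,\Po(\sigma_K\le T)\le\log(1+\|y\|_V^2)+CT$.

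Two minor remarks. First, $w_0$ and the $w_j$ are independent, so there is no mixed quadratic variation to handle. Second, the moment bound with $\Phi$ announced in your first paragraph is never established. It is not needed, though: your final split (taking $M\ge 2K$) yields \eqref{conv-stopping-times-n-M} directly.
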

\begin{proof}
We start by noticing that, since $x_n\to x$ in $V$, the sequence $\|x_n\|_V$ is bounded; we set $R:=\sup_n\|x_n\|_V<\infty$.
Since, for every $z\in V$,
\begin{equation}\label{incl-eventi}
\{\tau_M^z<T\}
\subseteq
\left\{\sup_{s\in[0,T\wedge\tau^z_M]}\|u^z(s)\|_V^2\ge\tfrac M2\right\}
\cup
\left\{\int_0^{T\wedge\tau^z_M}\|Au^z(s)\|_H^2\,{\rm d}s\ge\tfrac M2\right\},
\end{equation}
and since $\tau^{x_n,x}_M=\min(\tau^{x_n}_M,\tau^x_M)$, we have
$\{\tau^{x_n,x}_M<T\}\subseteq\{\tau^x_M<T\}\cup\{\tau^{x_n}_M<T\}$; applying
\eqref{incl-eventi} to $z=x$ and $z=x_n$ separately, we obtain
\begin{multline}\label{stima-4pezzi}
\Po\left(\tau^{x_n,x}_M<T\right)
\le
\Po\left(\sup_{s\in[0,T\wedge\tau^x_M]}\|u^x(s)\|_V^2\ge\tfrac M2\right)
+\Po\left(\int_0^{T\wedge\tau_M^x}\|Au^x(s)\|_{H}^2\,{\rm d}s\ge\tfrac M2\right)
\\
+\Po\left(\sup_{s\in[0,T\wedge\tau^{x_n}_M]}\|u^{x_n}(s)\|_V^2\ge\tfrac M2\right)
+\Po\left(\int_0^{T\wedge\tau_M^{x_n}}\|Au^{x_n}(s)\|_{H}^2\,{\rm d}s\ge\tfrac M2\right).
\end{multline}
By Lemma \ref{est_in_P} applied with $z=x$ and
$z=x_n$, and since $\|x_n\|_V\le R$ for every $n$, each of the four terms on the
right-hand side of the above expression is bounded, uniformly in $n$, by an expression of the form
\[
\frac{\widehat C_T}{(M/2)^{p/2}},
\]
which does not depend on $n$ and tends to $0$ as $M\to\infty$, since $p>0$. Taking the supremum over $n\in\mathbb N$  in \eqref{stima-4pezzi} and then
letting $M\to\infty$, we conclude that
\[
\lim_{M\to\infty}\sup_n\Po(\tau^{x_n,x}_M<T)=0,
\]
which concludes the proof.
\end{proof}

We have now all the ingredients to prove Theorem \ref{teo-Feller}.

\begin{proof}[Proof of Theorem \ref{teo-Feller}]
The proof of the existence of an invariant measure is inspired by \cite[\S 4.5]{HLL}.
We write it here since have a different definition of stopping time; but as the reader can see, it does not affect this proof.

Let $t>0$ and $\psi \in \C_b(V)$ be fixed. To prove that the semigroup $P$ defined in \eqref{def-Pt} is Feller,
we have to prove that, given a sequence $(x_n )_n \subset V$
which converges in $V$ to
$x\in V$ as $n \rightarrow \infty$,  the sequence $P_t\psi(x_n)$ converges to
$P_t\psi(x) $ as $n \rightarrow \infty$.
    We notice that, since $\psi \in \C_b(V)$, it is enough  to show that
    \begin{equation}\label{in-probab}
    u^{x_n}(t)\to u^x(t) \text{ in probability} .
    \end{equation}
    In fact, since $\psi$ is continuous, the above implies that $\psi(u^{x_n}(t))\to \psi(u^{x}(t))$
    in probability. Moreover, since $\psi$ is bounded, from the dominated convergence theorem
    it follows
    \[
    \E[\psi(u^{x_n}(t))] \to \E[\psi(u^{x}(t))],
    \]
    which is exactly what we need to prove.

Let us thus prove \eqref{in-probab}. We consider the stopping time
   $\tau^{x_n,x} _M$ introduced in \eqref{def-tau-n-M}.
Exploiting the Chebyshev inequality, thanks to Lemma \ref{l1}, we infer, for every $\eps>0$
\[\begin{split}
\Po\left(\sup_{t \in [0,T]} \|u^{x_n}(t)-u^x(t)\|_V>\eps\right)
&\le
\Po\left(\sup_{t \in [0,T]} \|u^{x_n}(t)-u^x(t)\|_V>\eps, T\le \tau^{x_n,x}_M\right)
+
\Po\left(\tau^{x_n,x}_M<T\right)
\\&
\le \frac 1{\eps^2}\E \left[\sup_{t \in [0,T\wedge \tau^{x_n,x}_M]} \|u^{x_n}(t)-u^x(t)\|_V^2\right]+
\Po\left(\tau^{x_n,x}_M<T\right)
\\&\le
\frac{C_M}{\eps^2}\|x_n-x\|_V^2+
\Po\left(\tau^{x_n,x}_M<T\right).
\end{split}\]
Letting  $n\to\infty$
we get
\[
0\le \limsup_{n\to\infty} \Po\left(\sup_{t \in [0,T]} \|u^{x_n}(t)-u^x(t)\|_V>\eps\right)
\le \limsup_{n\to\infty} \Po\left(\tau^{x_n,x}_M<T\right) \le  \sup_n \Po\left(\tau^{x_n,x}_M<T\right).
\]
Now, letting $M\to \infty $, thanks to Lemma \ref{l2}, we get
\[
\lim_{n\to\infty}\Po\left(\sup_{t \in [0,T]}  \|u^{x_n}(t)-u^x(t)\|_V>\eps\right) =0,
\]
which proves the  convergence in probability and concludes the proof.
\end{proof}

\subsection{Proof of the existence of an invariant measure}
\label{measures}

\begin{proof}[Proof of part (i) of Theorem \ref{thm_existence_invmeas}]
The proof for the existence of an invariant measure relies on the Krylov-Bogoliubov technique (see, e.g., \cite[\S 4.5]{DPZ1}) that requires that the transition semigroup $P$ is Feller and the tightness property for time-averaged laws.
\\
The Feller property has been proved in Theorem \ref{teo-Feller}. It remains to prove the tightness property.
To this end, let  the initial velocity $x \in V$ vanish  and denote by  $u^{0}$  the corresponding solution of problem \eqref{SNS}. We have to show that the family of measures
$(\mu_t)_{t>0}\subset \mathscr{P}(V)$ defined by
\begin{equation*}
\mu_t : A \mapsto \frac 1t \int_0^t (P_s\pmb{1}_{A})(0)\, {\rm d}s
=\frac 1t \int_0^t \mathbb{P}\left(u^0(s) \in A \right)\, {\rm d}s, \qquad A \in \cB(V), \ t>0,
\end{equation*}
is tight.
Let $B_n$ be the closed ball in $D(A)$ of radius $n \in \mathbb{N}$.
Then $B_n$ is a compact subset of $V$ since the embedding $D(A) \hookrightarrow V$ is compact.
Hence, the Chebychev inequality and Proposition \ref{est_H2_norm} yield, for every $t\ge 1$,
\begin{equation*}\begin{split}
\mu_t(B_n^c)
&=\frac 1t \int_0^t \mathbb{P}\left(u^0(s) \in A \right)\, {\rm d}s
\le
\frac 1t \int_0^t \mathbb{P} \big(\|Au^0(s)\|_H  >  \tfrac 1 {\sqrt{1+\lambda_1^{-2}}} n \big)\, {\rm d}s
\\
&\le \frac{C}{n^p}\mathbb{E} \left[ \frac{1}{t}\int_0^t \|Au^0(s)\|^p_H\, {\rm d}s\right]
\le \frac{C'}{n^p}.
\end{split}\end{equation*}
Therefore
\[
\forall \eps>0 \;\quad \exists\ n_\eps:
\
\sup_{t\ge 1}\mu_t\left(B_{n_\varepsilon}^c\right)<\eps
\]
and the thesis follows.

\end{proof}

\subsection{Proof of properties of the invariant measures}
\label{measures-prop}

\begin{proof}[Proof of part (ii) of Theorem \ref{thm_existence_invmeas}]
    Let $\mu \in \mathscr{P}(V)$ be an invariant measure
for the transition semigroup $P$.
\\
{\sc Step 1.} Let us start by showing that there exists for a positive constant $C$, independent of $\mu$,  such that 
\begin{equation}\label{sti_V}
\int_{V} \|x\|^p_V\, \mu({\rm d} x) \leq C.
\end{equation}
To this end, we consider the mapping
$\Phi$ defined as
\begin{equation*}
\Phi: V \ni  x \mapsto \|x\|^p_{V} \in [0,+\infty] 
\end{equation*}
and its approximations $\{\Phi_n\}_{n\in\mathbb{N}}$, where, for every $n\in\mathbb{N}$,
$\Phi_n:V\to[0,n^p]$ is defined (setting $B_n^V$ as the closed ball of radius $n$ in $V$) as
\begin{equation*}
\Phi_n:V \ni x \mapsto
\begin{cases}
\|x\|^p_{V} & \text{if} \ x \in B_n^V,
\\
n^p & \text{otherwise}.
\end{cases}
\end{equation*}
It is not difficult to check that actually $\Phi_n\in\C_b(V)$ for every $n\in\mathbb{N}$.
Exploiting the invariance of $\mu$ and the definition of $P$, we have
\begin{equation}\label{invariance-step1}
\int_{V} \Phi_n(x)\, \mu({\rm d} x)=
\int_{V} P_s \Phi_n(x)\, \mu({\rm d} x)=  \int_{V} \E\left[\Phi_n(u^x(s))\right]\, \mu({\rm d} x)
\qquad \forall\, s\ge 0.
\end{equation}
Since $\Phi_n$ is bounded by $n^p$, 
we can apply the Fatou lemma for the limsup to the r.h.s. of
\eqref{invariance-step1} as $s\to\infty$, obtaining
\[
\int_{V} \Phi_n(x)\, \mu({\rm d} x)
=\limsup_{s\to\infty}\int_{V} \E\left[\Phi_n(u^x(s))\right]\, \mu({\rm d} x)
\le \int_{V} \limsup_{s\to\infty}\E\left[\Phi_n(u^x(s))\right]\, \mu({\rm d} x).
\]
Since $\Phi_n(y)\le\|y\|_V^p$ pointwise for every $y\in V$, we have
$\E[\Phi_n(u^x(s))]\le\E[\|u^x(s)\|_V^p]$ for every $x\in V$ and $s\ge0$, and therefore,
by Proposition \ref{est_V_prop}, we infer
\[
\limsup_{s\to\infty}\E\left[\Phi_n(u^x(s))\right]
\le \limsup_{s\to\infty}\mathbb{E}\left[\|u^x(s)\|^p_V\right]\le \bar{C_1}
\qquad \forall\, x\in V.
\]
We conclude that,
for every $n \in \mathbb{N}$,
\[
\int_{V} \Phi_n(x)\, \mu({\rm d} x) \le  \bar{C_1}.
\]
We point out that  the constant $ \bar{C_1}$ is independent of $\mu$.
Since $\Phi_n$ converges pointwise and monotonically from below to $\Phi$,
the Monotone Convergence Theorem yields \eqref{sti_V}, with $C:= \bar{C_1}$.

{\sc Step 2.} We prove now that
\begin{equation}\label{sti_Z}
\int_{V} \|x\|^p_{D(A)}\, \mu({\rm d} x) \le C.
\end{equation}
To this end, we argue as in the previous step, considering the map
$\Psi:V \rightarrow [0,+\infty]$ defined by
\begin{equation*}
\Psi: V \ni  x \mapsto \|x\|^p_{D(A)}\pmb{1}_{V \cap D(A)}(x)+ \infty \ \pmb{1}_{V \cap D(A)^C}(x) \in [0,\infty),
\end{equation*}
and its approximations $\{\Psi_n\}_{n\in\mathbb{N}}$
defined, with   $B_n^{D(A)}$ denoting  the closed ball of radius $n$ in $D(A)$,  by 
\begin{equation*}
\Psi_n:V \ni x \mapsto
\begin{cases}
\|x\|^p_{D(A)} & \text{if} \ x \in B_n^{D(A)} \cap V,
\\
n^p & \text{otherwise},
\end{cases}
\in [0,n^p].
\end{equation*}
One has that $\Psi_n\in\mathcal{B}_b(V)$ for every $n\in\mathbb{N}$.
Hence, arguing as above by using
the invariance of $\mu$ (keeping in mind Remark \ref{rem_inv}), the boundedness of $\Psi_n$, the definition of $P$,
and the Fubini-Tonelli Theorem, we get
\begin{align*}
\int_{V} \Psi_n(x)\, \mu({\rm d} x)
&=
\int_{V}\int_0^1 \mathbb{E}\left[\Psi_n(u^x(s)) \right]\,{\rm d}s\, \mu({\rm d} x)
\\
&\le \int_{ V}\int_0^1 \mathbb{E}\left[\|u^x(s)\|_{D(A)}^p \right]\,{\rm d}s\, \mu({\rm d} x)
\le (1+\lambda_1^{-2})^{p/2} \int_{ V}\int_0^1 \mathbb{E}\left[\|Au^x(s)\|_{H}^p \right]\,{\rm d}s\, \mu({\rm d} x).
\end{align*}
Proposition \ref{est_H2_norm} together with the estimate \eqref{sti_V} entail then
\[
  \int_{V} \Psi_n(x)\, \mu({\rm d} x) \le
  C\left(1+\int_{V}\|x\|_V^p\,\mu({\rm d} x)\right) \leq C.
\]
Applying then the Monotone Convergence Theorem implies  \eqref{sti_Z}.
\end{proof}

\section{Existence of an ergodic invariant measure}
\label{sect:ergmeas}

For every invariant measure $\mu$,
by density and by definition of invariance,
the transition  semigroup $P$ can be extended (with the same symbol for brevity) to
a strongly continuous linear semigroup of contractions on $L^q(V,\mu)$ for every $q\in[1,+\infty)$.
 Here we consider its extension in  $L^2(V,\mu)$.

One possible definition of ergodic measure is the following (see \cite[Theorem 3.2.4]{DPZ2}).

\begin{definition}
  \label{def:erg}
  An invariant measure $\mu\in\mathscr{P}(V)$ for the  transition semigroup $P$ is said to be ergodic if
  \[
  \lim_{t \rightarrow \infty}\frac 1t \int_0^tP_s \varphi\, {\rm d}s
  = \int_V \varphi \, {\rm d} \mu \qquad \text{in } L^2(V, \mu)\quad \forall\, \varphi \in L^2(V, \mu).
  \]
\end{definition}

It is well known that if there exists a unique invariant measure, then it is ergodic.
So far we have proved  the existence but not the uniqueness of  the invariant measures of the stochastic
Navier-Stokes equation \eqref{SNS}.
Anyway, we can prove that there exists at least one ergodic invariant measure.
\begin{theorem}
\label{ergodic}
Assume [A1], [A2] and [A3]. Then,
there exists an ergodic invariant measure for the transition semigroup $P$.
\end{theorem}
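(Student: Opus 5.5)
The plan is the classical Krein--Milman argument. Let $\mathcal I\subset\mathscr P(V)$ be the set of invariant measures of $P$. By Theorem \ref{thm_existence_invmeas}(i), $\mathcal I\neq\emptyset$. The set is clearly convex, since invariance is a linear condition in $\mu$. I would show that $\mathcal I$ is compact in the topology of weak convergence of measures. Then, viewing $\mathscr P(V)$ inside the locally convex space of finite signed measures with the weak topology generated by $\C_b(V)$, the Krein--Milman theorem gives that $\mathcal I$ has an extreme point. Finally I would invoke the standard fact (see \cite[Theorem 3.2.4, Proposition 3.2.7]{DPZ2}) that extreme points of $\mathcal I$ are exactly the ergodic invariant measures in the sense of Definition \ref{def:erg}.

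\textbf{Tightness.} By part (ii) of Theorem \ref{thm_existence_invmeas}, every $\mu\in\mathcal I$ satisfies $\int_V\|x\|_{D(A)}^p\,\mu({\rm d}x)\le C$ with $C$ independent of $\mu$. The closed balls $B_n$ of $D(A)$ are compact in $V$, and the Chebyshev inequality gives $\mu(B_n^c)\le C n^{-p}$ uniformly on $\mathcal I$. Hence $\mathcal I$ is tight, and by Prokhorov's theorem it is relatively compact, and metrizable in the weak topology since $V$ is Polish.

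\textbf{Closedness.} Let $\mu_k\in\mathcal I$ converge weakly to $\mu$. Fix $t\ge0$ and $\psi\in\C_b(V)$. The Feller property (Theorem \ref{teo-Feller}) gives $P_t\psi\in\C_b(V)$, so we may pass to the limit on both sides of $\int\psi\,{\rm d}\mu_k=\int P_t\psi\,{\rm d}\mu_k$. This yields $\mu\in\mathcal I$. Thus $\mathcal I$ is a nonempty convex compact set.

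\textbf{Extreme points.} The remaining step is the characterization of extreme points, which I would quote rather than reprove. If one wants a self-contained argument, suppose $\mu$ is extremal but not ergodic. Then there is an invariant set $\Gamma$ with $0<\mu(\Gamma)<1$, meaning $P_t\pmb 1_\Gamma=\pmb 1_\Gamma$ $\mu$-a.e. for all $t$, obtained from the von Neumann ergodic theorem in $L^2(V,\mu)$. The conditioned measures $\mu(\cdot\,|\,\Gamma)$ and $\mu(\cdot\,|\,\Gamma^c)$ are then invariant, which decomposes $\mu$ nontrivially and contradicts extremality.

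The main subtlety is precisely this step: constructing the invariant set $\Gamma$ from a nonconstant invariant function in $L^2(V,\mu)$. This uses that $P_t$ is a positive contraction semigroup on $L^2(V,\mu)$ and that the fixed-point space is a lattice closed under taking indicator functions of level sets. It is standard and is covered by the cited reference, so I expect no genuine obstacle beyond verifying the hypotheses: Markov semigroup, Feller property, and measurability. These have been established above.
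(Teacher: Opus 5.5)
Your proposal is correct and follows essentially the paper's route: $\mathcal I\neq\emptyset$ by Theorem \ref{thm_existence_invmeas}(i), tightness from the uniform moment bound \eqref{int_mu_V} via Chebyshev and the compact embedding $D(A)\hookrightarrow V$, then Krein--Milman together with the identification of extreme points as ergodic measures. Your explicit closedness step via the Feller property is a useful addition: the paper only shows that $\overline\Gamma$ is compact and leaves implicit that $\Gamma=\overline\Gamma$, which is needed for the extreme point it obtains to actually be an invariant measure.
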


\begin{proof}
It is well known (see, e.g., \cite{Ali}[Theorem 19.25]) that,
 for an arbitrary Markov transition semigroup $\{P_t\}_{t\ge 0}$,
the ergodic measures are precisely the extreme
points of the convex set of its invariant measures. On the other hand, the Krein--Milman
Theorem, see, e.g., \cite{Ali}[Theorem 7.68], characterizes the convex compact subsets of a
locally convex Hausdorff space as the closed convex hull of their extreme points.

Let us denote by $\Gamma\subset \mathscr{P}(V)$ the convex set of all invariant measures for the
Markov semigroup $\{P_t\}_{t\ge0}$ obtained in Theorem \ref{thm_existence_invmeas}. There we proved that
$\Gamma$ is non empty; thus, in view of the above discussion, it only remains to show that
$\bar\Gamma$ (the closure of $\Gamma$ in $\mathscr{P}(V)$, with respect to the topology of
weak convergence) is compact, or, equivalently by Prokhorov's theorem, that $\Gamma$ is tight.

By estimate \eqref{int_mu_V} in Theorem \ref{thm_existence_invmeas}, there exists a constant $C>0$, depending
on the structural parameters, such that
\begin{equation*}
\int_{V}\|x\|^p_{D(A)}\, \mu({\rm d}x) \le C \qquad \forall\,\mu\in \Gamma,
\end{equation*}
By the Chebychev inequality we infer
\[
\mu(x\in V: \|x\|_{D(A)}>R)\le \frac {\int_{V}\|x\|^p_{D(A)}\mu({\rm d}x)} {R^p}
\le \frac C  {R^p}
\]
with a bound which is uniform w.r.t. any $\mu\in\Gamma$,
since the constant $C$ does not depend on $\mu$.
Bearing in mind the compactness of the embedding $D(A)\subset V$, we obtain
that  the set $\Gamma$ is tight.
The Krein--Milman theorem then applies and yields that $\overline\Gamma$ admits
(at least one) extreme point. By the characterization recalled at the beginning of the
proof, such an extreme point is an ergodic invariant measure for $\{P_t\}_{t\ge0}$, which
concludes the proof.
\end{proof}

\appendix

\section{Estimates on the solution}

\subsection{Estimates in expectation}
We prove some a priori estimates for the  global unique solution $u$ of equation \eqref{SNS}.
For this purpose we will use the following functional. Fix $p >0$ and define
\[
\cE : V \to [1,+\infty), \qquad u \mapsto \cE(u) := \big(1 + \|u\|_V^2\big)^{p/2}.
\]
 The first and second derivatives of $\cE$ are given by
\[
\cE^\prime(u)[h]=  p (1+\| u\|_V^2)^{\frac p2-1}(u, h)_V
                                  = p \frac{(u, h)_V}{(1+\| u\|_V^2)^{1-\frac p2}}, \qquad u, h \in V
\]
\begin{align*}
\cE^{\prime\prime}(u)[h,k]
&=p(\tfrac p2-1)(1+\|u\|_V^2)^{\frac p2- 2}2 ( u, k)_V ( u, h)_V
+p(1+\|u\|_V^2)^{\frac p2-1}(k, h)_V
\\
&=- p(2-p) \frac{(u, k)_V (u,h)_V}{(1+\|u\|_V^2)^{2-\frac p2}}
  + p\frac{( k, h)_V}{(1+\| u\|_V^2)^{1-\frac p2}}, \qquad u, h,k \in V.
\end{align*}

\begin{proposition}
\label{est_V_prop}
Let assumptions [A1], [A2] and [A3] be in force and let $u_0 \in V$.
for every
$p \in (0, 1)$  there exists a positive constant $\bar C_1$, depending on the structural parameters, such that
\[
\E \left[\| u(t)\|_V^p\right]
\le
(1+\|u_0\|_V^2)^{\frac p2}\,e^{-t} +\bar C_1,
\]
for every $t\ge 0$.
\end{proposition}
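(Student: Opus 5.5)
We apply the It\^o formula to $\cE(u(t))=(1+\|u(t)\|_V^2)^{p/2}$, using the first and second derivatives of $\cE$ computed above, along the solution of \eqref{SNS} (in the strong formulation, after a localization by the stopping times $\tau_M$ of \eqref{tau_M}, to be removed at the end). The drift splits into four contributions. The Stokes term gives $-p\nu\|Au\|_H^2(1+\|u\|_V^2)^{\frac p2-1}\le 0$. The convective term gives $-p(Au,B(u))_H(1+\|u\|_V^2)^{\frac p2-1}$, which does not vanish in 3D. The forcing term gives $p(Au,f)_H(1+\|u\|_V^2)^{\frac p2-1}$. The noise contributes the It\^o corrections from $g(u)=\varphi(u)u$ and from $G$. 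The key point is the It\^o correction of the multiplicative noise:
\[
\tfrac12\cE''(u)[g(u),g(u)] = \tfrac p2 \frac{|\varphi(u)|^2\|u\|_V^2}{(1+\|u\|_V^2)^{1-\frac p2}} - \tfrac{p(2-p)}2\frac{|\varphi(u)|^2\|u\|_V^4}{(1+\|u\|_V^2)^{2-\frac p2}} .
\]
Since $p<1$, we have $2-p>1$. Lemma \ref{lemA} with small $\delta$ turns the positive part into $\delta$ times the negative part plus $K_\delta\|u\|_V^2(1+\|u\|_V^2)^{\frac p2-1}$. Hence there is a net dissipative term $-c_p |\varphi(u)|^2\|u\|_V^4(1+\|u\|_V^2)^{\frac p2-2}$ with $c_p>0$. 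By [A1b)] this is at least of order $b^2\|u\|_V^{2\kappa}(1+\|u\|_V^2)^{\frac p2-2}$, which for large $\|u\|_V$ behaves like $\|u\|_V^{2\kappa+p-4}$.

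Next we absorb the bad terms. By \eqref{prima-stime-B}, Young's inequality and $\|u\|_{D(A)}\lesssim\|Au\|_H$,
\[
|(Au,B(u))_H|\le \tfrac\nu4\|Au\|_H^2 + C\|u\|_V^6 .
\]
After multiplication by $(1+\|u\|_V^2)^{\frac p2-1}$ the remainder behaves like $\|u\|_V^{4+p}$. We must compare this with the dissipation $\|u\|_V^{2\kappa+p-4}$, which needs $2\kappa-4>4$, i.e.\ $\kappa>4$, and that is not assumed. So this naive splitting is the main obstacle. The remedy is to use the gain $\|Au\|^2_H$ more cleverly. We interpolate
\[
\|B(u)\|_H\|Au\|_H\le C\|Au\|_H^{3/2}\|u\|_V^{3/2},
\]
and split with an exponent chosen against the multiplicative dissipation. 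Write $X=\|Au\|_H^2$ and $Y=\|u\|_V^2$. The weighted term is $X^{3/4}Y^{3/4}(1+Y)^{\frac p2-1}$. We bound it by $\frac\nu4 X(1+Y)^{\frac p2-1}$ plus $Y^{3}(1+Y)^{\frac p2-1}$, and then by the superlinear term, exploiting that $|\varphi|^2\|u\|_V^4\ge b^2\|u\|_V^{2\kappa+4}$ has exponent $2\kappa+4>8$. Concretely, the remainder is of order $Y^{2+p/2}$ against the dissipation $Y^{\kappa+p/2}$ (taking into account the weight $(1+Y)^{\frac p2-2}$), so $\kappa>2$ suffices. Thus every polynomial bad term is dominated by $\varepsilon$ times the dissipation plus a constant.

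The remaining terms are lower order. The forcing term is at most $\frac\nu4\|Au\|_H^2(1+\|u\|_V^2)^{\frac p2-1}+C\|f\|_H^2$. The additive noise correction is at most $\frac p2\sum_j\gamma_j^2$ by [A2], since the negative part can be dropped. The term $K_\delta\|u\|_V^2(1+\|u\|_V^2)^{\frac p2-1}\le K_\delta$ is bounded. Since the dissipation grows faster than $\cE(u)=(1+\|u\|_V^2)^{p/2}$, we also get $-c\,\text{dissipation}\le -\cE(u)+C$. Altogether
\[
{\rm d}\cE(u)\le \big(-\cE(u)+\bar C_1\big)\,{\rm d}t + {\rm d}M_t .
\]
We take expectations after stopping; the martingale has zero mean because its integrand is bounded up to $\tau_M$. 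Then $\tau_M\to\infty$ by Lemma \ref{l2}-type bounds (global existence), and Fatou's lemma gives $\E\cE(u(t))\le \cE(u_0)e^{-t}+\bar C_1$. Since $\|u\|_V^p\le\cE(u)$, the claim follows, using Gronwall in the form $\frac{d}{dt}\E\cE\le -\E\cE+\bar C_1$.
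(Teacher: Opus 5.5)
Your final argument is the same as the paper's. You apply It\^o's formula to $\cE(u)=(1+\|u\|_V^2)^{p/2}$ and use the splitting $|(B(u),Au)_H|\le\frac\nu4\|Au\|_H^2+C\|u\|_V^6$. You get a net negative It\^o correction from $g(u)=\varphi(u)u$ via $2-p>1$ and Lemma \ref{lemA}, compare $Y^{2+p/2}$ with $Y^{\kappa+p/2}$ (where $Y=\|u\|_V^2$), and conclude by Gronwall for ${\rm d}\cE\le(-\cE+C)\,{\rm d}t+{\rm d}M$. However, two intermediate steps are wrong as written.

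\textbf{The ``main obstacle'' is an arithmetic slip.} You dropped the factor $\|u\|_V^4$. The dissipation is
$|\varphi(u)|^2\|u\|_V^4(1+\|u\|_V^2)^{\frac p2-2}\ge b^2\|u\|_V^{2\kappa+4}(1+\|u\|_V^2)^{\frac p2-2}\sim\|u\|_V^{2\kappa+p}$,
not $\|u\|_V^{2\kappa+p-4}$. So the naive remainder $\|u\|_V^6(1+\|u\|_V^2)^{\frac p2-1}\sim\|u\|_V^{4+p}$ is already dominated once $\kappa>2$. Your ``remedy'' is literally the same Young splitting, and it is exactly what the paper does. No cleverer use of $\|Au\|_H^2$ is needed, and the claim that $\kappa>4$ would be required should be deleted.

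\textbf{The Lemma \ref{lemA} step fails as phrased.} Suppose you apply \eqref{A1c-ineq} directly to the positive correction $\frac p2|\varphi|^2Y(1+Y)^{\frac p2-1}$. This produces $\frac p2\delta|\varphi|^2Y^2(1+Y)^{\frac p2-1}$, which is $\frac{\delta(1+Y)}{2-p}$ times the negative term $\frac{p(2-p)}2|\varphi|^2Y^2(1+Y)^{\frac p2-2}$. For large $Y$ it cannot be absorbed, whatever $\delta$ is.

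The correct bookkeeping, as in the paper, has three steps. First, put both terms over $(1+Y)^{2-\frac p2}$ and write $|\varphi|^2Y(1+Y)=|\varphi|^2Y^2+|\varphi|^2Y$. Second, combine the $Y^2$-parts into $-(1-p)|\varphi|^2Y^2$; this is where $p<1$ is really used. Third, apply Lemma \ref{lemA} with $\delta=\frac{1-p}2$ only to $|\varphi|^2Y$. This leaves $K_\delta Y(1+Y)^{\frac p2-2}\le K_\delta$.

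By contrast, your remainder $K_\delta Y(1+Y)^{\frac p2-1}$ is not bounded: it grows like $Y^{p/2}$. It would still be absorbed by the dissipation, but your claim that it is $\le K_\delta$ is false.

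\textbf{A minor point on the localization.} After stopping at $\tau_M$, the inequality for $\E\cE(u(t\wedge\tau_M))$ does not close into a Gronwall inequality directly. Instead, apply It\^o to $e^{t}\cE(u(t))$ up to $t\wedge\tau_M$. This gives $\E[e^{t\wedge\tau_M}\cE(u(t\wedge\tau_M))]\le\cE(u_0)+\bar C_1(e^t-1)$. Then let $M\to\infty$ and use Fatou's lemma.
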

\begin{proof}
We apply the
It\^o formula to  $\|u\|_V^2=\langle A u,u\rangle $ and obtain, for every $t \ge 0$, $\mathbb{P}$-a.s.
\begin{multline}
{\rm d}\| u(t)\|_V^2
=
- 2\nu \|A u(t)\|_H^2{\rm d}t
+ 2\big( f,Au(t)\big)_H\,{\rm d}t
- 2\big(  B(u(t)),Au(t) \big)_H {\rm d}t
+ 2\varphi(u(t)) \| u(t)\|_V^2 {\rm d}w_0(t)
\\
+2 \sum_{j=1}^\infty \gamma_j \big( u(t),e^V _j\big)_V {\rm d}w_j(t)
+ \left(\varphi(u(t))^2 \|u(t)\|_V^2
+ \sum_{j=1}^\infty \gamma_j^2 \right){\rm d}t.
\end{multline}
The Cauchy-Schwartz and Young inequalities and estimate \eqref{prima-stime-B} yield
\[
|\big( B(u),Au\big)_H |
\le
C\|u\|_V^{\frac 32} \|u\|_{D(A)}^{\frac 32}
\le
\frac \nu 4 \|Au\|_H^2 +\frac {C_\nu}2 \|u\|_V^6,
\]
while, by the Cauchy--Schwarz and Young inequalities,
\[
\big(  f,Au\big)_H
\ \le\ \|f\|_H\|Au\|_H
\ \le\ \frac\nu4\|Au\|_H^2+\frac4\nu\|f\|_H^2,
\]
from which we infer
\begin{multline}\label{Ito_V}
{\rm d}\| u(t)\|_V^2
+\nu \|A u(t)\|_H^2{\rm d}t
\le
\left( C_\nu \|u(t)\|_V^6 + \varphi(u(t))^2 \|u(t)\|_V^2
+\sum_{j=1}^\infty\gamma_j^2
+\frac2\nu\|f\|_H^2
\right){\rm d}t
\\
+2\varphi(u(t)) \| u(t)\|_V^2 {\rm d}w_0(t)
+2\sum_{j=1}^\infty \gamma_j \big( u(t),e^V _j\big)_V {\rm d}w_j(t).
\end{multline}
We now apply the It\^o formula to $\cE(u)=(1+\|u\|_V^2)^{p/2}$
with $p>0$. Later on a further restriction on $p$ will appear.
Bearing in mind \eqref{Ito_V}, we infer
\begin{equation}\begin{aligned}
(1+\| u(t)\|_V^2)^{\frac p2}
&+ \frac p 2 \nu \int_0^t \frac{\|A u(s)\|_H^2}{(1+\| u(s)\|_V^2)^{1-\frac p2}}{\rm d}s
\\
&\le
(1+\|u_0\|_V^2)^{\frac p2}
+\frac p 2 C_\nu \int_0^t \frac{\| u(s)\|^6_V}{(1+\| u(s)\|_V^2)^{1-\frac p2}}{\rm d}s
+ M(t)
\\
&-\frac p2 (2-p)\int_0^t \frac{\|u(s)\|_V^4 \varphi^2(u(s))}{(1+\|u(s)\|_V^2)^{2-\frac p2}} {\rm d}s
+\frac p2 \int_0^t \frac {\|u(s)\|_V^2 \varphi^2(u(s))}{(1+\|u(s)\|_V^2)^{1-\frac p2}} {\rm d}s
\\
&-\frac p2 (2-p)\int_0^t \frac{\sum_j \gamma_j^2 (u(s), e^V_j)_V ^2}{(1+\|u(s)\|_V^2)^{2-\frac p2}} {\rm d}s
+\frac p2 \int_0^t \frac {\sum_j \gamma_j^2 +\;\frac2\nu \|f\|_H^2}{(1+\|u(s)\|_V^2)^{1-\frac p2}} {\rm d}s,
\end{aligned}
\end{equation}
with the martingale term
\[
M(t):=
p \int_0^t  \frac{\varphi(u(s)) \|u(s)\|_V^2 } {(1+\| u(s)\|_V^2)^{1-\frac p2}}\,{\rm d}w_0(s)
+ p\sum_{j=1}^\infty \gamma_j   \int_0^t \frac{(u(s),  e_j^V )_V }{(1+\| u(s)\|_V^2)^{1-\frac p2}}\,{\rm d}w_j(s).
\]
When $p\in (0,2)$,  we bound $\frac 1 {(1+\| u\|_V^2)^{1-\frac p2}} $ by 1.
Moreover,  neglecting one negative addend and summing up three integrals we get
\begin{equation}
\label{stimaxtight}
\begin{aligned}
&(1+\| u(t)\|_V^2)^{\frac p2}
+ \frac p 2 \nu \int_0^t \frac{\|A u(s)\|_H^2}{(1+\| u(s)\|_V^2)^{1-\frac p2}}{\rm d}s
\\
&\le
(1+\|u_0\|_V^2)^{\frac p2}
+ M(t) +\frac p2  \left(\sum_j \gamma_j^2 +\;\frac2\nu\|f\|_H^2\right)  t
\\
&+ \frac p 2  \int_0^t
 \frac{ C_\nu  \| u(s)\|^6_V (1+\| u(s)\|_V^2)   - (2-p) \varphi^2 (u(s)) \|u(s)\|_V^4 +  \varphi^2(u(s)) \|u(s)\|_V^2(1+\|u(s)\|_V^2)}{(1+\|u\|_V^2)^{2-p/2}}
         {\rm d}s .
         \end{aligned}
\end{equation}
Now we consider the numerator of the latter integrand; using  Lemma  \ref{lemA}  when $0<p<1$ we get
\[\begin{split}
-(2-p) \varphi^2(u) \|u\|_V^4 + \varphi^2(u) \|u\|_V^2(1+\|u\|_V^2)
&=
  \varphi^2(u) \big(\|u\|_V^2 - (1-p)\|u\|_V^4\big)
\\&
= \varphi^2(u) \big(\|u\|_V^2 - \tfrac{1-p}2 \|u\|_V^4\big)  -  \varphi^2(u)\tfrac{1-p}2 \|u\|_V^4
\\&
\le K_{\frac{1-p}2} \|u\|_V^2- \varphi^2(u)\tfrac{1-p}2 \|u\|_V^4.
\end{split}\]
Using this estimate and the assumption [A1b)], we can  bound the integrand in the r.h.s. of \eqref{stimaxtight}  by
\begin{equation}\label{frazione-3pezzisopra}
\frac{p}{2} \times \frac{C_\nu \|u\|_V^6(1+\|u\|_V^2) +
      K_{\frac{1-p}2} \|u\|_V^2- \tfrac{1-p}2  b^2 \|u\|_V^{2(\kappa+2)}}{(1+\|u\|_V^2)^{2-p/2}}.
\end{equation}
Since $\kappa>2, $
we can find a  positive constant
$\widetilde C$, depending on $C_\nu$, $K_{\frac{1-p}2}$, $p$, $b$ and $\kappa$,    such that 
\[
C_\nu \|u\|_V^6(1+\|u\|_V^2) +
      K_{\frac{1-p}2} \|u\|_V^2- \tfrac{1-p}2  b^2 \|u\|_V^{2(\kappa+2)}
      \le
      \widetilde C - \tfrac 2p(1+\|u\|_V^2)^{2},
\]
for every $u\in V$. In fact this corresponds to the inequality
\[C_\nu y^6(1+y^2) +
      K_{\frac{1-p}2} y^2+\tfrac 2p(1+y^2)^{2}
      \le  \tfrac{1-p}2  b^2 y^{2(\kappa+2)}
      +
      \widetilde C.\]
Substituting in \eqref{stimaxtight} we get the estimate
\begin{equation}\label{stimaxtight3}
    \begin{aligned}
(1+\| u(t)\|_V^2)^{\frac p2}
+ \frac p 2 \nu \int_0^t \frac{\|A u(s)\|_H^2}{(1+\| u(s)\|_V^2)^{1-\frac p2}}{\rm d}s
\le
(1+\|u_0\|_V^2)^{\frac p2} - \int_0^t (1+\|u(s)\|_V^2)^{\frac p2}  {\rm d}s
\\
+ M(t) +\frac p2  \left(\widetilde C+ \sum_j \gamma_j^2 +\;\frac2\nu\|f\|_H^2\right)  t.
\end{aligned}
\end{equation}
Now we neglect the integral in the l.h.s.
Taking the expected value on both sides, thanks to the Gronwall's lemma we infer
\[
\mathbb{E}\left[(1+\| u(t)\|_V^2)^{\frac p2}\right] \le (1+\|u_0\|_V^2)^{\frac p2}\,e^{-t}
 +\frac p2  \left(\widetilde C+ \sum_j \gamma_j^2 +\;\frac2\nu\|f\|_H^2\right) .
\]
and the thesis follows as an immediate consequence.
\end{proof}

The second result involves the $D(A)$-norm of the solution.

\begin{proposition}
\label{est_H2_norm}
Let assumptions [A1], [A2] and [A3] be in force and let $u_0\in V$. for every
$p \in (0, 1)$ there exists a positive constant $\bar C_2$, depending on the structural parameters,  such that
\begin{equation}\label{stima-in-media-uniforme}
\E \left[\int_0^t  \|A u(s)\|_{H}^{p} {\rm d }s\right]
    \le
    \bar C_2 \left[(1+\| u_0\|_V^2)^{\frac p2} +t\right]
\qquad\forall \ t >0.
\end{equation}
\end{proposition}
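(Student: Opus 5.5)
The plan is to start from inequality \eqref{stimaxtight3} in the proof of Proposition \ref{est_V_prop}, this time keeping the integral on the left-hand side. After dropping the nonnegative term $\int_0^t(1+\|u(s)\|_V^2)^{p/2}\,{\rm d}s$ and taking expectations (the martingale $M$ has zero mean, after localisation and Fatou), we obtain for every $p\in(0,1)$
\[
\frac p2\nu\,\E\int_0^t \frac{\|Au(s)\|_H^2}{(1+\|u(s)\|_V^2)^{1-\frac p2}}\,{\rm d}s
\le (1+\|u_0\|_V^2)^{\frac p2} + C t .
\]
The localisation is done with the stopping times $\tau_M$ from \eqref{tau_M}; on $[0,t\wedge\tau_M]$ the integrands of $M$ are bounded by [A1c)], so the stochastic integrals are true martingales. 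We then let $M\to\infty$ by monotone convergence.

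Next we convert this weighted $L^2$ bound into the unweighted $L^p$ bound. Pointwise, by Young's inequality with exponents $2/p$ and $2/(2-p)$,
\[
\|Au\|_H^p = \Big(\frac{\|Au\|_H^2}{(1+\|u\|_V^2)^{1-\frac p2}}\Big)^{\frac p2}(1+\|u\|_V^2)^{\frac p2(1-\frac p2)}
\le \frac{\|Au\|_H^2}{(1+\|u\|_V^2)^{1-\frac p2}} + (1+\|u\|_V^2)^{\frac p2}.
\]
The exponent works out because raising the second factor to the power $2/(2-p)$ gives $(1+\|u\|_V^2)^{p/2}$. Integrating in time and taking expectations, the first term is controlled by the bound above. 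For the second term, Proposition \ref{est_V_prop} (more precisely the bound on $\E(1+\|u(s)\|_V^2)^{p/2}$ obtained in its proof) gives
\[
\E\int_0^t(1+\|u(s)\|_V^2)^{\frac p2}\,{\rm d}s \le (1+\|u_0\|_V^2)^{\frac p2}+Ct.
\]
Alternatively, one can keep the term $\int_0^t(1+\|u\|_V^2)^{p/2}\,{\rm d}s$ on the left of \eqref{stimaxtight3}, which yields the same estimate directly. Combining the two bounds gives \eqref{stima-in-media-uniforme} with $\bar C_2$ depending only on $p,\nu,\|f\|_H,\sum_j\gamma_j^2$ and the constants in [A1].

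The only delicate point is justifying that the expectation of the martingale term vanishes (or is nonpositive in the limit) and the passage $M\to\infty$. This is handled by the stopping-time localisation together with Fatou's lemma on the left-hand side. All remaining steps are pointwise inequalities already available.
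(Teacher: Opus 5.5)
Your proof is correct. Its first half coincides with the paper's: you take expectations in \eqref{stimaxtight3} to obtain the weighted bound \eqref{la-stima-per-D(A)}. Where you differ is in how the weighted $L^2$ bound becomes the unweighted $L^p$ bound.

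The paper uses the Poincar\'e inequality $\lambda_1\|u\|_V^2\le\|Au\|_H^2$ to replace the weight $(1+\|u\|_V^2)^{1-p/2}$ by $\lambda_1^{-(1-p/2)}(\lambda_1+\|Au\|_H^2)^{1-p/2}$. After some algebra this gives the pointwise bound $\|Au\|_H^p\le C+C\,\|Au\|_H^2(1+\|u\|_V^2)^{\frac p2-1}$, so only \eqref{la-stima-per-D(A)} is needed.

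You instead split $\|Au\|_H^p$ by Young's inequality with exponents $2/p$ and $2/(2-p)$ into the weighted quantity plus $(1+\|u\|_V^2)^{p/2}$. You then control the latter by the dissipative term $-\int_0^t(1+\|u(s)\|_V^2)^{p/2}\,{\rm d}s$, which is already in \eqref{stimaxtight3} (or by Proposition \ref{est_V_prop}).

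Your route is shorter and avoids a second use of the spectral gap. Its cost is that it needs the time-integrated $V$-moment bound, but that comes for free from the same inequality. The paper's route gives a self-contained pointwise inequality that needs no information on the $V$-moments.

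You also spell out the localisation (stopped martingale, then monotone convergence as the level tends to infinity), which the paper leaves implicit. The boundedness of $\varphi(u)$ up to $\tau_M$ follows most directly from \eqref{phi-bounded}, since $\|u(s)\|_V^2\le M$ there, rather than from [A1c)].
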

\begin{proof} We go back to the proof of the previous Proposition.
From \eqref{stimaxtight3} we get  that
\begin{equation}\label{la-stima-per-D(A)}
   \frac p2  \nu\E \int_0^t  \frac{\|A u(s)\|_H^2}{(1+\| u(s)\|_V^2)^{1-\frac p2}} {\rm d }s
    \le
    (1+\| u_0\|_V^2)^{\frac p2} +\frac p2 \left(\widetilde C+ \sum_j \gamma_j^2 +\;\frac2\nu\|f\|_H^2\right) t.
\end{equation}
The Poincar\'e inequality \eqref{Poincare-in-DA} yields
\begin{equation}\label{semplice-poincare}
\frac 1{(\lambda_1+\|Au\|_H^2)^{1-\frac{p}2}}
\le
\frac 1{(\lambda_1+\lambda_1\|u\|_V^2)^{1-\frac{p}2}}.
\end{equation}
Using \eqref{semplice-poincare}, we bound $\|Au\|_H^p$ as follows
\[
\|Au\|_H^p
= \|Au\|_H^p\frac {(\lambda_1+\|Au\|^2_H)^{1-\frac p2} } {(\lambda_1+\|Au\|_H^2)^{1-\frac p2}}
\le \|Au\|_H^p
\frac {(\lambda_1+\|Au\|^2_H)^{1-\frac p2} } {(\lambda_1+\lambda_1\|u\|_V^2)^{1-\frac{p}2}}
=
\frac 1 {\lambda_1^{1-\frac{p}2}} \frac {(\lambda_1\|Au\|^{\frac{2p}{2-p}}_H +\|Au\|^{\frac 4{2-p}}_H)^{\frac {2-p}2}} {(1+\|u\|_V^2)^{1-\frac{p}2}}.
\]
By the Young inequality, $\lambda_1^{1-\frac p2}\|Au\|_H^{p}\le\|Au\|_H^{2}+C$ for a constant
$C:=C(p,\lambda_1)>0$; hence the numerator is bounded by $C+2\|Au\|_H^{2}$. Therefore,
\[
\|Au\|_H^p
\le
\frac 1 {\lambda_1^{1-\frac{p}2}} \frac {C +2\|Au\|_H^{2} } {(1+\|u\|_V^2)^{1-\frac{p}2}}
\le
\frac C{\lambda_1^{1-\frac{p}2}}+\frac 2{\lambda_1^{1-\frac{p}2}} \frac{\|Au\|_H^{2}}{(1+\|u\|_V^2)^{1-\frac{p}2}}.
\]
Integrating in time, taking the mathematical expectation on both sides and
keeping in mind \eqref{la-stima-per-D(A)},  we conclude the proof.
\end{proof}

\subsection{Localized estimates in probability}
In this Section we reformulate and prove \cite[Lemma 7.7]{BZZ26} with the addition of additive noise and deterministic forcing.

\begin{lemma}
\label{est_in_P}
Let $\tau^x_M$ be the stopping time defined in \eqref{tau_M}. Let $p \in (0,1)$.
Then, for every
$R\ge1$ and $T\ge0$, there exist constants $\widehat C_T^1,\widehat C_T^2>0$,
depending also on $p, u_0, \sum \gamma_j^2,\|f\|_H^2$ and the structural parameters, but not
on $R$, such that
\begin{equation}\label{tauM-per-V}
\Po\Big(\sup_{s\in[0,T\wedge\tau^x_M]}\|u(s)\|_V^2\ge R^2\Big)
\ \le\ \frac{\widehat C_T^1}{R^p},
\end{equation}
\begin{equation}\label{tauM-per-D(A)}
\Po\Big(\int_0^{T\wedge\tau_M}\|Au^x(s)\|_H^2\,{\rm d}s\ge R^2\Big)
\ \le\ \frac{\widehat C_T^2}{R^p}.
\end{equation}
\end{lemma}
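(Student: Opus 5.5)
The plan is to go back to the pathwise inequality \eqref{stimaxtight3} obtained in the proof of Proposition \ref{est_V_prop}, applied to $u=u^x$ with $u_0=x$, and to evaluate it at $t\wedge\tau^x_M$. Discarding the nonpositive term $-\int_0^t(1+\|u\|_V^2)^{p/2}{\rm d}s$, we get for every $t\le T$
\begin{equation*}
(1+\|u(t\wedge\tau^x_M)\|_V^2)^{\frac p2}
+\frac p2\nu\int_0^{t\wedge\tau^x_M}\frac{\|Au(s)\|_H^2}{(1+\|u(s)\|_V^2)^{1-\frac p2}}{\rm d}s
\le (1+\|x\|_V^2)^{\frac p2}+K_1T+M(t\wedge\tau^x_M),
\end{equation*}
where $K_1:=\frac p2(\widetilde C+\sum_j\gamma_j^2+\frac2\nu\|f\|_H^2)$. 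The stopping time guarantees that the stopped martingale $M(\cdot\wedge\tau^x_M)$ is a true (square integrable) martingale. This holds because on $[0,\tau^x_M]$ the $V$-norm is bounded by $\sqrt M$ and, by [A1a)] and \eqref{phi-bounded}, $\varphi(u)$ is bounded there as well.

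\textbf{Estimate \eqref{tauM-per-V}.} I will control $\E\sup_{t\le T}(1+\|u(t\wedge\tau^x_M)\|_V^2)^{p/2}$ uniformly in $M$ and then apply Chebyshev: $\Po(\sup\|u\|_V^2\ge R^2)\le R^{-p}\E\sup(1+\|u\|_V^2)^{p/2}$. To handle the martingale supremum I apply BDG. Its quadratic variation is bounded by
\begin{equation*}
p^2\int_0^{T\wedge\tau^x_M}\frac{\varphi^2(u)\|u\|_V^4+\sum_j\gamma_j^2\|u\|_V^2}{(1+\|u\|_V^2)^{2-p}}{\rm d}s .
\end{equation*}

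This is the main obstacle: the $\varphi^2\|u\|_V^4$ term is superlinear and cannot be absorbed directly by $\sup(1+\|u\|_V^2)^{p/2}$. The remedy is to retain, rather than discard, the good negative term $-\frac p2\cdot\frac{1-p}2\int\varphi^2\|u\|_V^4(1+\|u\|_V^2)^{p/2-2}$ from \eqref{frazione-3pezzisopra}. One can do this by splitting the constant in the polynomial inequality so that half of the $b^2$-term survives. Taking expectations in the stopped inequality then gives
\begin{equation*}
\E\int_0^{T\wedge\tau^x_M}\frac{\varphi^2(u)\|u\|_V^4}{(1+\|u\|_V^2)^{2-p/2}}{\rm d}s\le C\big((1+\|x\|_V^2)^{p/2}+T\big).
\end{equation*}
The BDG integrand is this quantity times $(1+\|u\|_V^2)^{p/2}$. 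Hence
\begin{equation*}
\E\big(\langle M\rangle_{T\wedge\tau^x_M}\big)^{1/2}\le \E\Big[\sup(1+\|u\|_V^2)^{p/4}\Big(\int\cdots\Big)^{1/2}\Big]
\le \tfrac12\E\sup(1+\|u\|_V^2)^{p/2}+C\E\int\cdots .
\end{equation*}
The additive part is bounded similarly, since $\|u\|_V^2(1+\|u\|_V^2)^{p-2}\le 1$. The first term is absorbed on the left; this is legitimate because it is finite, thanks to the stopping. The result is $\E\sup_{t\le T}(1+\|u(t\wedge\tau^x_M)\|_V^2)^{p/2}\le C_T(1+(1+\|x\|_V^2)^{p/2})=:\widehat C^1_T$, independent of $M$ and $R$.

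\textbf{Estimate \eqref{tauM-per-D(A)}.} On the event $\{\sup_{s\le T\wedge\tau^x_M}\|u\|_V^2<R^2\}$ we have
\begin{equation*}
\int_0^{T\wedge\tau^x_M}\|Au\|_H^2\,{\rm d}s\le (1+R^2)^{1-\frac p2}\,J,\qquad
J:=\int_0^{T\wedge\tau^x_M}\frac{\|Au\|_H^2}{(1+\|u\|_V^2)^{1-p/2}}{\rm d}s,
\end{equation*}
and $\E J\le C((1+\|x\|_V^2)^{p/2}+T)$ by the stopped inequality. Therefore
\begin{equation*}
\Po\Big(\int_0^{T\wedge\tau^x_M}\|Au\|_H^2\,{\rm d}s\ge R^2\Big)
\le \frac{\widehat C^1_T}{R^p}+\Po\Big(J\ge \frac{R^2}{(1+R^2)^{1-p/2}}\Big)
\le \frac{\widehat C^1_T}{R^p}+\frac{2\,\E J}{R^p},
\end{equation*}
using $R^2/(1+R^2)^{1-p/2}\ge R^p/2$ for $R\ge1$. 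This yields \eqref{tauM-per-D(A)}.
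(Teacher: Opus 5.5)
Your argument is correct. For \eqref{tauM-per-D(A)} it matches the paper's argument: split on $\{\sup\|u\|_V<R\}$, bound $\int\|Au\|_H^2$ by $(1+R^2)^{1-p/2}J$, and apply Markov to $J$.

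For \eqref{tauM-per-V} you take a genuinely different and heavier route. The paper never estimates the supremum in expectation. Instead it introduces the hitting time $\sigma_R$ of the level $R$ by $\|u(\cdot)\|_V$ before $\tau^x_M$ and notes that $\cE(u(T\wedge\sigma_R\wedge\tau^x_M))\ge R^p\,\mathbf 1_{\{\sigma_R\le T\wedge\tau^x_M\}}$. It then applies the expectation bound \eqref{eq_old_proof} at the bounded stopping time $T\wedge\sigma_R\wedge\tau^x_M$. This Doob-type maximal argument only uses that the stopped martingale has zero mean, so no control of $\langle M\rangle$ is needed. The obstacle you single out, the superlinear $\varphi^2\|u\|_V^4$ contribution to the quadratic variation, never arises, and the $\varphi^2$-dissipation can be discarded entirely, as in \eqref{stimaxtight3}.

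Your BDG route costs extra work: you must retain part of the dissipative term and absorb $\tfrac12\E\sup\cE$, which is legitimate thanks to the stopping. In exchange it yields the stronger conclusion $\E\sup_{t\le T}\cE(u(t\wedge\tau^x_M))\le C_T$, uniformly in $M$, from which the tail bound follows by Chebyshev.

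One point needs more careful phrasing. What must survive is a fraction of $\varphi^2(u)\|u\|_V^4(1+\|u\|_V^2)^{p/2-2}$ itself. That is, you split $-\tfrac{1-p}{2}\varphi^2(u)\|u\|_V^4$ into two halves and apply [A1b)] to only one of them; that half still dominates the polynomial terms because $\kappa>2$. Retaining instead part of the lower bound $b^2\|u\|_V^{2(\kappa+2)}$, as ``half of the $b^2$-term survives'' suggests, would not control $\langle M\rangle$, because [A1b)] bounds $|\varphi|$ only from below.
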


\begin{proof}
Starting from \eqref{stimaxtight3} we obtain, for every $t\ge0$,
\[
(1+\|u(t)\|_V^2)^{\frac p2}
+\frac p2\nu\int_0^t\frac{\|Au(s)\|_H^2}{(1+\|u(s)\|_V^2)^{1-\frac p2}}\,{\rm d}s
\ \le\ (1+\|u_0\|_V^2)^{\frac p2}+M(t)+C't,
\]
for a constant $C'>0$ depending on the structural parameters, $\sum\gamma_j^2$ and
$\|f\|_H^2$. Taking the integral from $0$ to $T \wedge \tau^x_M$ and
then expectation, for every $T\ge0$, we infer
\begin{equation}\label{eq_old_proof}
\E\big[\cE(u(T\wedge\tau^x_M))\big]
+\frac p2\nu\,\E\int_0^{T\wedge\tau^x_M}\frac{\|Au(s)\|_H^2}{(1+\|u(s)\|_V^2)^{1-\frac p2}}\,{\rm d}s
\ \le\ \cE(u_0)+C'T,
\end{equation}
where we recall that $\cE(u):=(1+\|u\|_V^2)^{p/2}$.

Let us now prove \eqref{tauM-per-V}. Define
\[
\sigma_R:=\inf\{t\in[0,\tau_M^x):\|u(t)\|_V\ge R\}.
\]
It holds that
\begin{equation}\label{est_cont}
\Po\left(\sup_{t\in[0,T\wedge\tau_M^x]}\|u(t)\|_V\ge R\right)\ \le\ \Po(\sigma_R\le T\wedge\tau_M^x).
\end{equation}
Since $\|u(\sigma_R)\|_V=R$ on $\{\sigma_R\le T\wedge\tau_M^x\}$, we infer that
\[
\cE(u(\sigma_R))=(1+R^2)^{p/2}\ge R^p,
\]
which implies
\[
\cE\big(u(T\wedge\sigma_R\wedge\tau_M^x)\big)\ \ge\ R^p\,\mathbf 1_{\{\sigma_R\le T\wedge\tau_M^x\}}.
\]
Hence, in view of \eqref{eq_old_proof}, there exists $\widehat C_T^1>0$ such
that
\begin{equation}\label{est_cont2}
\Po(\sigma_R\le T\wedge\tau_M^x)
\ \le\ \frac1{R^p}\,\E\big[\cE(u(T\wedge\sigma_R\wedge\tau_M^x))\big]
\ \le\ \frac1{R^p}\big(\cE(u_0)+C'T\big)=:\frac{\widehat C_T^1}{R^p}.
\end{equation}
Therefore, \eqref{est_cont} and \eqref{est_cont2}  yields
\eqref{tauM-per-V}.

Let us now deal with \eqref{tauM-per-D(A)}. It holds
\begin{equation*}\begin{aligned}
\Po\left(\int_0^{T\wedge\tau_M^x}\|Au(t)\|_H^2\,{\rm d}t\ge R^2\right)
&\le\ \Po\left(\sup_{t\in[0,T\wedge\tau_M^x]}\|u(t)\|_V\ge R\right)
\\
&+\Po\left(\int_0^{T\wedge\tau_M^x}\|Au(t)\|_H^2\,{\rm d}t\ge R^2,\
\sup_{t\in[0,T\wedge\tau_M^x]}\|u(t)\|_V<R\right).
\end{aligned}\end{equation*}
Note that if $\sup_{t\in[0,T\wedge\tau_M^x]}\|u(t)\|_V<R$, then, since $p \in (0,1)$,
\[
(1+\|u(t)\|_V^2)^{\frac{p-2}2}\ \ge\ (1+R^2)^{\frac{p-2}2},
\]
which implies
\[
\int_0^{T\wedge\tau_M^x}\|Au(t)\|_H^2\,{\rm d}t
\ \le\ (1+R^2)^{\frac{2-p}2}\int_0^{T\wedge\tau_M^x}(1+\|u(t)\|_V^2)^{\frac{p-2}2}\|Au(t)\|_H^2\,{\rm d}t.
\]
Keeping in mind that $R \ge 1$, thanks to estimate \eqref{tauM-per-V}, by the Markov inequality and estimate \eqref{eq_old_proof}, there exists $\widehat C_T^2>0$ such that \
\[
\Po\left(\int_0^{T\wedge\tau_M^x}\|Au(t)\|_H^2\,{\rm d}t\ge R^2\right)
\ \le\ \frac{\widehat C_T^1}{R^p}
+\Po\left(\int_0^{T\wedge\tau_M^x}(1+\|u(t)\|_V^2)^{\frac{p-2}2}\|Au(t)\|_H^2\,{\rm d}t
\ge\frac{R^2}{(1+R^2)^{\frac{2-p}2}}\right)
\]
\[
\le\ \frac{\widehat C_T^1}{R^p}
+\frac{2\big(\cE(u_0)+C'T\big)}{p\nu}\frac{(1+R^2)^{\frac{2-p}2}}{R^2}
\ \le\ \frac{\widehat C_T^1}{R^p}
+\frac{2^{2-\frac p2}}{p\nu}\frac{\cE(u_0)+C'T}{R^p}
=:\frac{\widehat C_T^2}{R^p}.
\]
This concludes the proof.
\end{proof}

\begin{remark}
\label{rem_k_2}
Propositions \ref{est_V_prop}, \ref{est_H2_norm} and Lemma \ref{est_in_P} also hold if $\kappa=2$ in Assumption [A1b)], provided
that $b$ is sufficiently large. Indeed, if $|\varphi(u)|\ge b\|u\|_V^2$, then, arguing as
above, the integrand in \eqref{frazione-3pezzisopra} is bounded by
\[
\tfrac{p}{2}\,\frac{C_\nu\|u\|_V^6(1+\|u\|_V^2)+K_{\frac{1-p}2}\|u\|_V^2-\tfrac{1-p}{2}\,b^2\|u\|_V^{8}}
{(1+\|u\|_V^2)^{2-p/2}}.
\]
Now $C_\nu\|u\|_V^6(1+\|u\|_V^2)$ contains the term $C_\nu\|u\|_V^8$, of the same degree as
the negative term. If
\begin{equation}\label{cond-kappa2}
C_\nu<\tfrac{1-p}{2}\,b^2 ,
\end{equation}
the coefficient of $\|u\|_V^8$ in the numerator is negative, so the numerator is bounded
above by $\widetilde C-\tfrac2p(1+\|u\|_V^2)^2$ for a suitable $\widetilde C>0$, exactly as in
the case $\kappa>2$, and the rest of the proofs are unchanged. Note also that
\eqref{cond-kappa2} is a restriction on $p$ for a given $b$: it holds if and only if
$p<1-2C_\nu/b^2$.
\end{remark}

\section{Sketch of the proof of Theorem \ref{ex_uniq_sol}}
\label{proof_well_pos}

\begin{proof}[Proof of Theorem \ref{ex_uniq_sol}]

We generalize  the proof  done in \cite{BZZ26}  for the Navier-Stokes equation \eqref{SNS} with $G=0$ and $f=0$ to the case  with also an additive noise and a  deterministic forcing term. We can assume $f\in L^2_{\text{loc}}(0,\infty;H)$, even if we considered before that $f$ is time independent, in order to get the existence of invariant measures.

\textit{\textbf{Step 1:} approximated problem (\cite[Section 4]{BZZ26}).}  The
map $g(u)=\varphi(u)u$ and $B(u)$ are truncated via a smooth cutoff
$\theta_n(\|u\|_{X_t})$ to make the fixed-point map globally Lipschitz on
$\mathbb{M}^2(X_T)$ (\cite[Propositions 4.7, 4.8]{BZZ26}). Since $G$ and $f$ do not depend
on $u$, the additive noise and the forcing terms are added without truncation, that is
\begin{equation*}\begin{aligned}
u_n(t)=S(t)u_0-\int_0^tS(t-r)\theta_n(\|u_n\|_{X_r})B(u_n(r))\,{\rm d}r
+\int_0^tS(t-r)\,f\,{\rm d}r
\\
+\int_0^tS(t-r)\theta_n(\|u_n\|_{X_r})g(u_n(r))\,{\rm d}w_0(r)
+\int_0^tS(t-r)G\,{\rm d}w(r).
\end{aligned}\end{equation*}
The extra additive-noise term is bounded using the semigroup estimates of
\cite{BZZ26}[Proposition 4.4] and exploiting Assumption [A2]:
\[
\Big\|\int_0^{\cdot}S(\cdot-r)G\,{\rm d}w(r)\Big\|^2_{\mathbb{M}^2(X_T)}
\le  C\,T\sum_{j=1}^\infty\gamma_j^2 <+\ \infty.
\]
The forcing term is even simpler to handle, since it involves no stochastic
integral at all: by \cite{BZZ26}[Proposition 4.4(i)], applied to the constant function
$f\in H$ one gets,
\[
\Big\|\int_0^{\cdot}S(\cdot-r)f\,{\rm d}r\Big\|_{X_T}^2\ \le\ C\,T\,\|f\|_H^2<+\infty.
\]
The estimates above show that the additive stochastic convolution
$\int_0^{\cdot}S(\cdot-r)G\,{\rm d}w(r)$ and the deterministic convolution $\int_0^{\cdot}S(\cdot-r)\,f{\rm d}r$ belong to $\mathbb{M}^2(X_T)$ (resp.\ $X_T$, $\Po$-a.s.), with norm
controlled by  a constant independent of $n$. Moreover these terms
 cancel exactly in the contraction estimates and hence the rest of proof remains unchanged.

\textit{\textbf{Step 2:} existence of a unique local solution.} The strategy for existence and
uniqueness of a local solution is exactly that of \cite{BZZ26}: the forcing term $f$ and the additive noise term  $G\,{\rm d}w(t)$ are state-independent, hence never subject
to the truncation $\theta_n$ and, being identical for every two solutions, cancels exactly in every pathwise-uniqueness argument.

\textit{\textbf{Step 3:}} local estimates for non explosion. The local a priori estimates
underlying non-explosion (\cite[Proposition 7.5, Lemma 7.6]{BZZ26}) must
be suitably modified to account for the additive noise and the deterministic forcing term; this is done in full details in Lemma \ref{est_in_P}.
We stress that Lemma \ref{est_in_P} is applied here to the
local maximal solution $(u,\tau)$ up to its own maximal existence
time. 

\textit{\textbf{Step 4:} existence of a unique global solution.} The conclusion of the proof, passing
from non-explosion to global existence, is again analogous to
\cite[Theorem 8.1, Theorem 8.2]{BZZ26}, since it depends only on the
functional form of the bounds established in Step 3, not on the
specific value of their constants.
\end{proof}

\bigskip 
\section*{Acknowledgments}
This work started when Z.B. was visiting the University of Pavia; he thanks the project "Dipartimento di Eccellenza" of the Department of Economics and Management for financial support. 
B.F. and M.Z. are members of Gruppo Nazionale per l’Analisi Matematica, la Probabilità e le loro Applicazioni (GNAMPA) of  Istituto Nazionale di Alta Matematica (INdAM).

\end{document}